\theoremstyle{plain}
\newtheorem{theorem}{Theorem}[section]
\newtheorem{lemma}[theorem]{Lemma}
\newtheorem{proposition}[theorem]{Proposition}
\newtheorem{definition}{Definition}[section]
\newtheorem{corollary}[theorem]{Corollary}
\theoremstyle{definition}
\numberwithin{equation}{section}
\newcommand{\R}{\mathbb{R}}
\newcommand{\abs}[2][]{#1\lvert #2 #1\rvert}
\newcommand{\FF}{{\mathcal S}}
\newcommand{\ws}{w^{(s)}}
\newcommand{\dmin}{d_-(r)}
\newcommand{\FFplus}{\FF_+(r)}
\newcommand{\FFmin}{\FF_-(r)}
\newcommand{\etamin}{\check{\eta}}
\newcommand{\wspm}{w^{(s_\pm)}}
\newcommand{\wtilde}{\tilde{w}}
\newcommand{\phis}{\Phi^{(s)}}
\newcommand{\phistar}{\Phi^{(s_\star)}}
\title{A sharp version of the Benjamin and Lighthill conjecture for steady waves with vorticity}
\author{Evgeniy Lokharu}
\address{Department of Mathematics, Link\"oping University, SE-581 83 Link\"oping, Sweden}
\keywords{Steady waves, vorticity, gravity waves, flow force}
\begin{document}
	
\begin{abstract}
	
We prove the Benjamin and Lighthill conjecture for all two-dimensional steady water waves with an arbitrary vorticity distribution. We show that the flow force constant of an arbitrary smooth wave is bounded by the corresponding flow force constants for conjugate laminar flows. We prove these inequalities without any assumptions on the geometry of the surface profile and put no restrictions on wave's amplitude. Furthermore, we give a complete description of cases when equalities can occur.  Our results are new already for Stokes waves with vorticity, while the case of equalities is new even in the irrotational setting. Beside proving the Benjamin and Lighthill conjectrure, we establish sharp bounds for the surface profile, extending previous results on two-dimensional steady water waves.
	
\end{abstract}

\maketitle

\section{Introduction} \label{s:introduction}

In this paper we address the classical water wave problem for two-dimensional steady waves with vorticity on water of finite depth, formulated in terms of Euler equations with a free boundary. While allowing for arbitrary exact solutions, representing nonlinear waves, we focus
on two questions related to fundamental bounds for the surface profile and a possible range for values of the flow force constant. The latter problem  known as Benjamin and Lighthill conjecture was introduced in \cite{Benjamin1954a}. The property conjectured by Benjamin and Lighthill (see also Keady \& Norbury \cite{Keady1975}, Conjecture 2) can be expressed as inequalities
\begin{equation} \label{BLconj}
\FF_-(r) \leq \FF \leq \FF_+(r),
\end{equation}
where $\FF$ is the flow force constant of a solution, $r$ is the corresponding Bernoulli constant, while $\FF_-(r)$ and $\FF_+(r)$ are flow force constants of conjugate laminar flows (supercritical and subcritical
respectively) determined by the same Bernoulli constant. According to the conjecture, inequality \eqref{BLconj} is valid for arbitrary smooth solutions. It was verified by Benjamin \cite{Benjamin95} for all
irrotational Stokes waves and their small perturbations, while the bottom bound in \eqref{BLconj} was obtained earlier by Keady \& Norbury \cite{Keady1975} (also for periodic wavetrains). Kozlov and Kuznetsov
\cite{Kozlov2009a,Kozlov2011a} proved \eqref{BLconj} for arbitrary solutions under weak regularity assumptions, provided the
Bernoulli constant $r$ is close to it's critical value $R_c$; it was extended to the rotational setting in \cite{Kozlov2017a}, again for $r \approx R_c$; the latter condition guarantees that solutions are of small amplitude. The left inequality in \eqref{BLconj} for periodic waves with a favorable vorticity was obtained by Keady \& Norbury \cite{Keady1978}. Whereas their result is valid under essential restrictions on the vorticity, they point out that in general the statement of the conjecture is probably false: "There is no reason to suppose that the conjectures of Benjamin and Lighthill will hold for all flows with vorticity". Thus, it is especially surprising that \eqref{BLconj} turns out to be true for arbitrary vorticity distributions and arbitrary solutions, which is one the main results of the present paper. 

Let us outline some difficulties and gaps associated with \eqref{BLconj}. Even so Benjamin \cite{Benjamin95} verified \eqref{BLconj} for all irrotational Stokes waves by estimating certain contour integrals using the divergence structure of the problem, their approach can hardly be extended further and it does not explain the nature of inequalities in \eqref{BLconj}. Furthermore, nonlinear waves on water of finite depth are
not limited to Stokes and solitary waves; see \cite{Vanden_Broeck_1983, Baesens1992, Zufiria1987, Craig2002}. In Section 5.3 of \cite{Benjamin95} Benjamin discusses a possibility to extend their method to arbitrary solutions, however the suggested argument depends heavily on the geometry of the flow (symmetry, monotonicity) and is not applicable in general. While for irrotational Stokes waves inequalities in \eqref{BLconj} are strict, the case of equalities becomes a significant problem in the context of arbitrary solutions. It is known that $\FF = \FF_-$ for all solitary waves, while it is unclear if the equality $\FF = \FF_-$ holds true only for solitary waves, symmetric and monotone on each side around the crest.

The Benjamin and Lighthill conjecture is closely related with another problem about bounds for the surface profile. If $y = \eta(x)$ determines the surface of the fluid in a moving frame of reference, then the following inequalities are well known:
\begin{equation} \label{profile_bounds}
	d_-(r) \leq \inf_{x \in \R} \eta(x) \leq d_+(r) \leq \sup_{x \in \R} \eta(x),
\end{equation}
where $d_-(r)$ and $d_+(r)$ are depths of the supercritical and subcritical flows respectively. First obtained by Keady \& Norbury \cite{KeadyNorbury78} for irrotational Stokes waves, it was extended to
arbitrary solutions by Kozlov and Kuznetsov \cite{Kozlov2007, Kozlov2009}; see also \cite{Kozlov2012}. We emphasize that Kozlov
and Kuznetsov \cite{Kozlov2009} obtained strict inequality $d_+(r) < \sup_{x\in\R} \eta(x)$ for arbitrary irrotational
solutions, provided $\eta$ is not a constant identically. While for Stokes waves it can be obtained by using the Hopf lemma, the general case is much more subtle. The argument in \cite{Kozlov2009} required a careful analysis of the Fourier symbol associated with an integro-differential operator and the irrotational nature of the problem was essential. For waves with vorticity only a weak form of \eqref{profile_bounds} is known; see \cite{Kozlov2012, Kozlov2015}.

In this paper we consider both problems, inequalities for the flow force \eqref{BLconj} and bounds \eqref{profile_bounds}. For an arbitrary wave with vorticity we prove \eqref{BLconj} and \eqref{profile_bounds} and provide a complete description of all cases when equalities can occur. The case of equalities in \eqref{BLconj} is new even in the irrotational setting. In fact, for all solutions other than streams and classical solitary waves all inequalities in \eqref{BLconj} and \eqref{profile_bounds} are shown to be strict. In particular, if a given solution satisfies $\FF = \FF_-(r)$ (without any assumptions on the surface profile), then it is necessarily a classical solitary wave of elevation, whose profile decays monotonically on each side of the crest. On the other hand, the relation $\FF = \FF_+(r)$ is only valid for subcritical laminar flows. This is a strong statement, because it, in particular, shows that subcritical solitary waves (with the Froude number less than one) do not exist. The latter was an open problem
for a long time even in the irrotational setting; it was resolved recently in \cite{KozLokhWheeler2020} by using an asymptotic analysis. In addition we prove that any steady wave is subject to strict inequalities
in \eqref{profile_bounds}, provided it is not a parallel flow or a solitary wave, for which the left inequality in \eqref{profile_bounds}
turns into the equality. This generalizes a series of previous results.

It is remarkable that our argument is based essentially on the classical maximum principle applied for a version of the flow force flux function, introduced recently in \cite{KozLokhWheeler2020}. Thus, the Benjamin and Lighthill conjecture \eqref{BLconj} is basically a consequence of the elliptic maximum principle, which explains the nature of \eqref{BLconj}. 

\section{Statement of the problem}

We consider the classical water wave problem for two-dimensional steady waves with vorticity on water of finite depth. We neglect effects of surface tension and consider a fluid of constant (unit) density. Thus, in an appropriate coordinate system moving along with the wave, stationary Euler equations are given by
\begin{subequations}\label{eqn:trav}
	\begin{align}
	\label{eqn:u}
	(u-c)u_x + vu_y & = -P_x,   \\
	\label{eqn:v}
	(u-c)v_x + vv_y & = -P_y-g, \\
	\label{eqn:incomp}
	u_x + v_y &= 0, 
	\end{align}
	which holds true in a two-dimensional fluid domain $D$, defined by the inequality
	\[
	0 < y < \eta(x).
	\]
	Here $(u,v)$ are components of the velocity field, $y = \eta(x)$ is the surface profile, $c$ is the wave speed, $P$ is the pressure and $g$ is the gravitational constant. The corresponding boundary conditions are
	\begin{alignat}{2}
	\label{eqn:kinbot}
	v &= 0&\qquad& \text{on } y=0,\\
	\label{eqn:kintop}
	v &= (u-c)\eta_x && \text{on } y=\eta,\\
	\label{eqn:dyn}
	P &= P_{\mathrm{atm}} && \text{on } y=\eta.
	\end{alignat}
\end{subequations}
It is often assumed in the literature that the flow is irrotational, that is $v_x - u_y$ is zero everywhere in the fluid domain. Under this assumption components of the velocity field are harmonic functions, which allows to apply methods of complex analysis. Being a convenient simplification it forbids modeling of non-uniform currents, commonly occurring in nature. In the present paper we will consider rotational flows, where the vorticity function is defined by
\begin{equation} \label{vort}
\omega = -v_x + u_y.
\end{equation}
Throughout the paper we assume that the flow is free from stagnation points and the horizontal component of the relative velocity field does not change sign, that is
\begin{equation} \label{uni}
u-c < 0 
\end{equation}
everywhere in the fluid. We call such flows unidirectional.

In the two-dimensional setup relation \eqref{eqn:incomp} allows to reformulate the problem in terms of a stream function $\psi$, defined implicitly by relations
\[
\psi_y = c-u, \ \ \psi_x =  v.
\]
This determines $\psi$ up to an additive constant, while relations \eqref{eqn:kinbot},\eqref{eqn:kinbot} force $\psi$ to be constant along the boundaries. Thus, by subtracting a suitable constant, we can always assume that
\[
\psi = m, \ \ y = \eta; \ \ \psi = 0, \ \ y = 0.
\]
Here $m > 0$ is the mass flux, defined by
\[
m = \int_0^\eta (c-u) dy.
\]
In what follows we will use non-dimensional variables proposed by Keady \& Norbury \cite{KeadyNorbury78}, where lengths and velocities are scaled by $(m^2/g)^{1/3}$ and $(mg)^{1/3}$ respectively; in new units $m=1$ and $g=1$. For simplicity we keep the same notations for $\eta$ and $\psi$.

Taking the curl of Euler equations \eqref{eqn:u}-\eqref{eqn:incomp} one checks that the vorticity function $\omega$ defined by \eqref{vort} is constant along paths tangent everywhere to the relative velocity field $(u-c,v)$; see \cite{Constantin11b} for more details. Having the same property by the definition, stream function $\psi$ is strictly monotone by \eqref{uni} on every vertical interval inside the fluid region. These observations together show that $\omega$ depends only on values of the stream function, that is
\[
\omega = \omega(\psi).
\]
This property and Bernoulli's law allow to express the pressure $P$ as
\begin{align}
\label{eqn:bernoulli}
P-P_\mathrm{atm} + \frac 12\abs{\nabla\psi}^2 + y  + \Omega(\psi) - \Omega(1) = const,
\end{align}
where 
\begin{align*}
\Omega(\psi) = \int_0^\psi \omega(p)\,dp
\end{align*}
is a primitive of the vorticity function $\omega(\psi)$. Thus, we can eliminate the pressure from equations and obtain the following problem:
\begin{subequations}\label{eqn:stream}
	\begin{alignat}{2}
	\label{eqn:stream:semilinear}
	\Delta\psi+\omega(\psi)&=0 &\qquad& \text{for } 0 < y < \eta,\\
	\label{eqn:stream:dyn}
	\tfrac 12\abs{\nabla\psi}^2 +  y  &= r &\quad& \text{on }y=\eta,\\
	\label{eqn:stream:kintop} 
	\psi  &= 1 &\quad& \text{on }y=\eta,\\
	\label{eqn:stream:kinbot} 
	\psi  &= 0 &\quad& \text{on }y=0.
	\end{alignat}
\end{subequations}
Here $r>0$ is referred to as Bernoulli's constant. 

Let us define the flow force constant, another motion invariant. Following Benjamin \cite{BENJAMIN1984}, we put
\begin{equation} \label{FFbyP}
\FF = \int_0^\eta (P - P_{atm} + (u-c)^2) dy.
\end{equation}
Taking $x$-derivative in \eqref{FFbyP} and using \eqref{eqn:u} together with the formula for the pressure \eqref{eqn:bernoulli}, one verifies that $\FF$ is a constant of motion independent of $x$. In terms of the stream function one obtains
\begin{equation} \label{flowforce}
\FF = \int_0^{\eta}(\tfrac12(\psi_y^2 - \psi_x^2) - y + \Omega(1) - \Omega(\psi) + r )\, dy.
\end{equation}
This constant is important in several ways; for instance, it plays the role of the Hamiltonian in spatial dynamics; see \cite{Baesens1992}. 

\subsection{Stream solutions} Laminar flows or shear currents, for which the vertical component $v$ of the velocity field is zero play an important role in the theory of steady waves. Let us recall some basic facts about stream solutions $\psi = U(y)$ and $\eta = d$, describing shear currents. It is convenient to parameterize the latter solutions by the relative speed at the bottom. Thus, we put $U_y(0) = s$ and find that $U = U(y;s)$ is subject to
\begin{equation} \label{eqn:laminar}
U'' + \omega(U) = 0, \ \ \ 0 < y < d; \ \ U(0) = 0, \ \ U(d) = 1.
\end{equation}
Our assumption \eqref{uni} implies $U' > 0$ on $[0; d]$, which puts a natural constraint on $s$. Indeed, multiplying the first equation in \eqref{eqn:laminar} by $U'$ and integrating over $[0; y]$, we find
\[
U'^2 = s^2 - 2\Omega(U).
\]
This shows that the expression $s^2 - 2 \Omega(p)$ is positive for all $p \in [0; 1]$, which requires
\[
s > s_0 = \sqrt{\max_{p \in [0,1]}2\Omega(p)}.
\]
On the other hand, every $s > s_0$ gives rise to a monotonically increasing function $U(y; s)$ solving \eqref{eqn:laminar} for some unique $d = d(s)$, given explicitly by
\[
d(s) = \int_0^1 \frac{1}{\sqrt{s^2 - 2\Omega(p)}}.
\]
This formula shows that $d(s)$ monotonically decreases to zero with respect to $s$ and takes values between zero and 
\[
d_0 = \lim_{s \to s_0+} d(s).
\]
The latter limit can be finite or not. For instance, when $\omega = 0$ we find $s_0 = 0$ and $d_0 = +\infty$. On the other hand, when $\omega = -b$ for some positive constant $b \neq 0$, then $s_0 = 0$ but $d_0 < + \infty$. We note that our main theorem is concerned with the case $d_0 < + \infty$.

Every stream solution $U(y;s)$ determines the Bernoulli constant $R(s)$, which can be found from the relation \eqref{eqn:stream:kintop}. This constant can be computed explicitly as
\[
R(s) = \tfrac12 s^2 - \Omega(1) + d(s).
\]
As a function of $s$ it decreases from $R_0$ to $R_c$ when $s$ changes from $s_0$ to $s_c$ and increases to infinity for $s>s_c$. Here the critical value $s_c$ is determined by the relation
\[
\int_0^1 \frac{1}{(s^2 - 2 \Omega(p))^{3/2}} dp = 1.
\]
The latter monotonicity property of $R(s)$ shows (see Figure 1) that for any $r \in (R_c;R_0)$ the equation $R(s) = r$ has exactly two solutions $s = s_-(r)$ and $s = s_+(r)$, such that $s_-(r) < s_c < s_+(r)$. The corresponding depths 
\[
d_-(r) = d(s_+(r)), \ \ d_+(r) = d(s_-(r))
\]
satisfy $d_-(r) < d_+(r)$ and are called supercritical and subcritical depths respectively. The flow force constants corresponding to depths $d_\pm(r)$ are denoted by $S_\pm(r)$.

Stream solutions $U(y; s_-(r))$ and $U(y; s_+(r))$ are said to be conjugate and are defined only under condition $r < R_0$. This assumption is naturally fulfilled for all irrotational waves, since
then $R_0 = +\infty$. Under certain assumptions on the vorticity it is shown in \cite{Kozlov2015} that no unidirectional
waves exist for $r > R_0$, except laminar flows. Furthermore, it was verified recently in \cite{Lokharu2020} for all vorticity distributions that solitary waves are absent for $r > R_0$. Thus, the assumption $r<R_0$ appears to be natural in what follows. The bottom bound $r > R_c$ is well known for arbitrary unidirectional waves with vorticity; see \cite{Kozlov2015}, \cite{Kozlov2017a} and \cite{Wheeler15b}.

\subsection{Formulations of main results.}

Following notations from the previous section, our main
theorem is

\begin{theorem} \label{thm:BLC}
	Let $\psi \in C^{2,\gamma}(\overline{D})$ and $\eta \in C^{2,\gamma}(\R)$ be a solution to \eqref{eqn:stream} with $\inf_{D}\psi_y > 0$ and $r \in (R_c,R_0)$. Then the flow force constant $\FF$ given by \eqref{flowforce} enjoys the following properties:
	\begin{itemize}
		\item[(i)] inequalities $\FF_-(r) \leq \FF \leq \FF_+(r)$ are always true;
		\item[(ii)] the equality $\FF = \FF_-(r)$ holds true only for supercritical laminar flows and symmetric solitary waves of positive elevation supported by supercritical streams;
		\item[(iii)] the equality $\FF = \FF_+(r)$ is true only for subcritical laminar flows.
	\end{itemize}
\end{theorem}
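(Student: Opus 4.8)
The plan is to introduce the flow force flux function mentioned in the introduction — the version from \cite{KozLokhWheeler2020} — and show that the Benjamin and Lighthill inequalities follow from the maximum principle applied to it. Concretely, I would define a scalar function $\Phi$ on the fluid domain whose value on vertical cross-sections, after integration, recovers the flow force constant $\FF$, and whose value on the laminar comparison flows recovers $\FFmin$ and $\FFplus$. The natural candidate is a flux-type function built from the integrand of \eqref{flowforce}, namely something like $\Phi = \Phi(x,y)$ satisfying $\Phi_y = \tfrac12(\psi_y^2-\psi_x^2) - y + \Omega(1) - \Omega(\psi) + r$ together with a compatible $\Phi_x$ chosen so that $\Phi$ is well defined (the mixed partials agree precisely because of the governing equation \eqref{eqn:stream:semilinear} and the divergence structure that makes $\FF$ independent of $x$). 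First I would verify that $\Phi$ satisfies a second-order elliptic equation in $D$ — this is where the semilinear structure $\Delta\psi + \omega(\psi) = 0$ and the unidirectionality $\psi_y > 0$ enter, the latter guaranteeing ellipticity and the correct sign conventions.

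The second step is to compute the boundary behavior of $\Phi$. On the bottom $y=0$ and on the free surface $y=\eta$, the dynamic condition \eqref{eqn:stream:dyn} and the kinematic conditions should force $\Phi$ to satisfy explicit boundary relations, so that the difference between $\Phi$ evaluated at the surface and at the bottom on a single cross-section equals $\FF$. Then I would compare $\Phi$ against the two laminar profiles: for the conjugate streams $U(y;s_\pm(r))$ one builds the corresponding flux functions $\phis$ explicitly, and their surface-minus-bottom differences are exactly $\FFplus$ and $\FFmin$. The key inequality $\FFmin \le \FF \le \FFplus$ should then emerge by positioning the graph of the wave between the two laminar depths $\dmin$ and $\dplus$ (using \eqref{profile_bounds}) and invoking the maximum principle to order the flux functions; the extrema of the relevant elliptic quantity are attained on the boundary, and the boundary data are controlled by the laminar values. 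The main obstacle I anticipate is setting up $\Phi$ so that the comparison is genuinely pointwise and the maximum principle applies cleanly despite the free boundary — one must handle the fact that the wave domain and the laminar strips have different heights, presumably by a change of variables (e.g. flattening via the stream function as the vertical coordinate, a partial hodograph transform) that puts all three flows on a common rectangular strip $0 < \psi < 1$.

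For the equality cases (ii) and (iii), the strategy is to trace back through the maximum principle. If $\FF = \FFmin$ or $\FF = \FFplus$, then the maximum or minimum of the comparison quantity is attained, and by the strong maximum principle together with the Hopf boundary-point lemma the relevant function must be identically equal to one of the laminar flux functions — forcing rigidity. For $\FF = \FFplus$ (case (iii)) the subcritical comparison should pin the solution to the subcritical laminar flow outright, since equality with the subcritical bound at an interior point propagates to all of $D$; this is how one rules out subcritical solitary waves. For $\FF = \FFmin$ (case (ii)) the supercritical comparison degenerates at infinity rather than at an interior point — equality is compatible with a solitary wave, which is asymptotically supercritical — so here I would argue that equality forces the wave to be a perturbation of the supercritical stream that decays at infinity, i.e. a solitary wave, and then invoke known symmetry results (moving planes / the fact that supercritical solitary waves of elevation are symmetric and monotone on each side of the crest) to get the full conclusion. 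The delicate point throughout the equality analysis will be distinguishing the interior-contact case (yielding a laminar flow) from the boundary-at-infinity case (yielding a solitary wave), and establishing the monotone elevation profile rigorously rather than merely asserting it.
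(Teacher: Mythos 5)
Your overall philosophy --- a flow force flux function, the partial hodograph transform, the maximum principle and the Hopf lemma, with rigidity for the equality cases --- is indeed the strategy announced in the paper's introduction, but as written your proposal has gaps that would stop the proof. First, your candidate flux function is the wrong object: you take a primitive of the flow force density of $\psi$ itself, so that its surface-minus-bottom jump is $\FF$. Such a potential exists (the momentum-flux tensor is divergence free), but there is no useful comparison between it and the laminar flux potentials: the quantity you would need to run the maximum principle on --- the difference of two such potentials attached to different flows --- does not satisfy a homogeneous elliptic equation in any evident way, and overcoming exactly this is the point of the \emph{relative} flow force flux function of \cite{KozLokhWheeler2020} that the paper uses. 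That function $\phis$ is built from the difference $\ws = h - H(p;s)$ on the flattened strip, is quadratic in $\nabla \ws$, vanishes on the bottom, satisfies the homogeneous equation \eqref{fff:main}, and its surface trace encodes $2(\FF - \sigma(s;r))$ plus terms in $\ws(q,1)$ via \eqref{fff:top}. Second, you compare only against the two conjugate streams $s_\pm(r)$, whereas the paper's proof runs over the whole family $s \in (s_0,\infty)$: the monotonicity in $s$ of the auxiliary functions $\sigma(s;r)$ and $\kappa(s;r)$ (Lemmas \ref{lemma:sigma} and \ref{lemma:kappa}), the identity $\inf_q \phis(q,1) = \kappa(s;r)$ whenever this infimum is nonpositive (Proposition \ref{p:fff}), and intermediate-value arguments in $s$ locating a stream with $\inf_q \phis(q,1) = 0$ are what convert the maximum principle into $\FFmin \le \FF \le \FFplus$; none of this machinery has a counterpart in your sketch, and the maximum principle alone does not yield the flow force inequalities. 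Third, your geometric premise is false: the wave profile is \emph{not} positioned between the two conjugate depths. One has $\eta > \dmin$ everywhere, but the surface crosses the subcritical depth, since $\etamin \le \dplus \le \etamax$ (Proposition \ref{p:bounds}); so the pointwise ordering you want to feed into the comparison does not exist.

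For the equality cases you state the right conclusions but supply no mechanism. In case (iii) the paper does not argue by interior contact: assuming $\FF \ge \FFplus$, it shows $\phis \ge 0$ for \emph{every} $s \in (s_0,s_-(r))$ (Lemma \ref{lemma:up}, again via $\kappa$), and then plays the Hopf lemma at the bottom against Proposition \ref{p:wp}, which supplies a stream with $H_p(0;s)$ exceeding $\sup_q h_p(q,0)$; an intermediate-value argument in $s$ then produces a point where $\phis_p(q_\star,0) = 0$, contradicting Hopf unless $h$ is the subcritical stream. In case (ii) the genuinely hard step --- which you flag as delicate but leave open --- is to prove that $\FF = \FFmin$ together with $\etamin = \dmin$ forces $w(q,1) \to 0$ as $q \to \pm\infty$; only after that decay is established can one invoke the symmetry and monotonicity theory of \cite{Hur07}. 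The paper's proof of this decay requires translation-compactness (the ``subsolution'' formalism), the small-amplitude classification of \cite{GrovesWahlen08}, the exponential asymptotics of Proposition \ref{solasymp}, and one further run of the $\kappa$-monotonicity contradiction on a sequence of rectangles along the surface (Lemma \ref{lemma:decay} and the argument following it). Without some argument of this kind, a hypothetical non-laminar, non-decaying solution with $\FF = \FFmin$ is not excluded by anything you wrote, and claim (ii) remains unproved.
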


The claim (i) of Theorem \ref{thm:BLC} is known as the classical Benjamin and Lighthill conjecture. We emphasise that it is new even in the irrotational case since it covers all possible smooth solutions; the original proof by Benjmain \cite{Benjamin95} deals only with Stokes waves (and small amplitude perturbations of those) and Benjmain's argument relies heavily on that assumption.

The parts (ii) and (iii) of Theorem \ref{thm:BLC} are of separate interest and had never been considered in the literature before. For instance, the claim (iii), in particular, forbids the existence of subcritical solitary waves; this was an open problem for a long time and was recently proved in
\cite{KozLokhWheeler2020} using an asymptotic analysis. Both statements (ii) and (iii) are new even for irrotational waves.

\begin{corollary}
	Under assumptions of the theorem, the water wave profile $\eta$ is subject to the following properties:
	\begin{itemize}
		\item[(i')] for all $x\in\R$ we have $d_-(r) < \eta(x)$;
		\item[(ii')] denoting $\hat{\eta} = \sup_{\R} \eta$ and $\check{\eta} = \inf_{\R} \eta$, we have $\check{\eta} < d_+(r) < \hat{\eta}$, while equalities $\check{\eta} = d_+(r)$ or $d_+(r) = \hat{\eta}$ are only possible if $\check{\eta} = \hat{\eta} = d_+(r)$;
		\item[(iii')] the equality $\check{\eta} = d_-(r)$ is valid only for supercritical laminar flows and supercritical solitary waves.
	\end{itemize}
\end{corollary}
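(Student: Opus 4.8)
The plan is to derive the profile bounds from Theorem~\ref{thm:BLC} together with the already known non-strict inequalities~\eqref{profile_bounds}, using the flow force flux function $\flow$ as the bridge between the geometry of $\eta$ and the value of $\FF$. First I would recall this tool: after normalising $\flow=0$ on $y=0$, a short computation from the dynamic and kinematic conditions on $y=\eta$ shows that $\flow\equiv\FF$ along the free surface. For a conjugate stream $U(\cdot;s)$ the analogous flux $\phis$ depends on $y$ alone, vanishes on the bottom, and equals the stream's flow force $\FF(s)$ on its surface $y=d(s)$. Passing to the semi-Lagrangian variables $(x,\psi)$, in which the wave and every stream occupy the fixed strip $0<\psi<1$, the height difference $\ws=h(\cdot,\cdot)-H(\cdot;s)$ satisfies a linear elliptic equation whose trace on $\psi=1$ is $\eta-d(s)$, while $\flow-\phis$ passes from $0$ on $\psi=0$ to $\FF-\FF(s)$ on $\psi=1$. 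The core step is a comparison lemma, proved by the maximum principle and the Hopf lemma for this equation: with $s=s_+(r)$ (the supercritical stream, of depth $\dmin$ and flow force $\FFmin$) it yields
\[
\etamin=\dmin \quad\Longleftrightarrow\quad \FF=\FFmin,
\]
and with $s=s_-(r)$ (the subcritical stream, of depth $\dplus$ and flow force $\FFplus$) it yields
\[
\big(\etamin=\dplus \ \text{ or }\ \etamax=\dplus\big)\quad\Longleftrightarrow\quad \FF=\FFplus .
\]

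Granting these equivalences, the three claims follow. For (iii'), if $\etamin=\dmin$ the first equivalence gives $\FF=\FFmin$, and Theorem~\ref{thm:BLC}(ii) forces $\psi$ to be a supercritical laminar flow or a symmetric solitary wave of positive elevation over a supercritical stream; conversely both of these have $\etamin=\dmin$. For (ii'), the weak bounds give $\etamin\le\dplus\le\etamax$, and an equality at either end produces $\FF=\FFplus$ through the second equivalence, whereupon Theorem~\ref{thm:BLC}(iii) makes $\psi$ a subcritical laminar flow, for which $\etamin=\etamax=\dplus$; hence for every other solution both inequalities are strict. For (i'), the weak bound gives $\eta\ge\dmin$, and if $\eta(x_0)=\dmin$ at a finite $x_0$ then $x_0$ attains $\etamin=\dmin$, so by the first equivalence and Theorem~\ref{thm:BLC}(ii) the solution is either the supercritical laminar flow (the degenerate case $\eta\equiv\dmin$) or a solitary wave of positive elevation, whose infimum is reached only as $x\to\pm\infty$; in that case no finite $x_0$ gives equality, and $\eta(x)>\dmin$ for all $x$.

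The hard part is the comparison lemma itself. Its forward direction, from a flow-force equality to a profile equality, is handed to us by Theorem~\ref{thm:BLC}; the reverse direction is the substance, since it must convert a single point of contact between $\eta$ and a conjugate depth into the rigidity $\ws\equiv0$. This is precisely where the flux-function maximum principle of the paper is needed, and it has to be carried out uniformly in the vorticity, without the Fourier-analytic machinery used in the irrotational case by Kozlov and Kuznetsov. Two further technical points require care: the strict upper bound $\dplus<\etamax$ with its rigidity, which is the delicate endpoint because the crest is only known a priori to reach at least the subcritical depth; and the fact that for solitary waves the extrema of $\eta$ occur only at $x=\pm\infty$, so that the maximum principle and the Hopf lemma must be applied on the unbounded strip, with the conjugate streams entering as the correct limits at $\pm\infty$.
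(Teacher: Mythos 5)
Your reduction of the corollary is exactly the paper's. The paper proves (i')--(iii') by combining Theorem \ref{thm:BLC} with Proposition \ref{p:bounds}, whose flow-force part is precisely the substantive direction of your ``comparison lemma'' ($\etamin = \dmin \ \Rightarrow\ \FF = \FFmin$, and $\etamin = \dplus$ or $\etamax = \dplus$ $\Rightarrow\ \FF = \FFplus$); the converse directions, as you observe, come for free from the theorem and are never needed. Your case analysis deriving (i'), (ii'), (iii') from these two ingredients is correct and coincides with the paper's one-line argument, including the observation that a contact $\eta(x_0) = \dmin$ at a finite $x_0$ can only occur for the laminar flow, while for solitary waves the infimum is approached only at infinity.

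The genuine gap is that the comparison lemma itself is never proved, and the route you sketch for it would fail as written. You propose to ``convert a single point of contact between $\eta$ and a conjugate depth into the rigidity $\ws \equiv 0$'' via the maximum principle and the Hopf lemma; but for precisely the solutions that make the lemma delicate --- solitary waves, or any wave whose infimum $\etamin$ is attained only as $x \to \pm\infty$ --- there is \emph{no} contact point, and the Hopf lemma cannot be applied ``on the unbounded strip with the conjugate streams entering as limits at $\pm\infty$'': it requires an actual boundary point where the extremum is attained. Moreover, rigidity of the wave itself is too strong a goal, since a supercritical solitary wave has $\FF = \FFmin$ without being laminar. The paper's proof of Proposition \ref{p:bounds} supplies the missing mechanism, the subsolution (translation--compactness) technique: choose $q_j$ with $w^{(s_+)}(q_j,1) \to 0$, pass to a limit $\wtilde$ of the translates $w^{(s_+)}(\cdot + q_j,\cdot)$ in $C^{2,\gamma'}$ on compact sets; this limit solves the same problem, carries the \emph{same} flow force constant $\FF$, and now attains the contact $\wtilde(0,1) = 0$. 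Then $\wtilde \geq 0$ by the maximum principle, and if $\wtilde \not\equiv 0$ the Hopf lemma gives $\wtilde_p(0,1) < 0$, which contradicts the nonlinear Bernoulli boundary identity \eqref{wspm}: at a point where $\wtilde = \wtilde_q = 0$ that identity forces $\wtilde_p(0,1) \geq 0$. Hence $\wtilde \equiv 0$ and $\FF = \FFmin$ (similarly at $\dplus$). The translation-and-limit step, which manufactures a contact point, and the use of the free-surface condition \eqref{wspm} to contradict Hopf are exactly the two ingredients absent from your sketch; the interior elliptic equation and maximum principle alone give no information at the free boundary.
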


These statements follow from Proposition \ref{p:bounds} and Theorem \ref{thm:BLC}. Inequalities $d_-(r) < \eta(x)$ and $\check{\eta} < d_+(r) < \hat{\eta}$ for irrotational Stokes waves were first obtained by Keady \& Norbury \cite{Keady1975}. An extension to arbitrary irrotational solutions was done by Kozlov \& Kuznetsov \cite{Kozlov2007,Kozlov2009}. For waves with
vorticity only non-strict versions of inequalities were known; see \cite{Kozlov2015} and references therein. The last claim (iii') is new even in the irrotational setting.

Our proofs are based on properties of flow force flux functions introduced recently in \cite{KozLokhWheeler2020} and used in \cite{Lokharu2020} for proving the nonexistence of steady waves with $r \geq R_0$.

\section{Preliminaries}

\subsection{Reformulation of the problem}

Under assumption \eqref{uni} we can apply the partial hodograph transform introduced by Dubreil-Jacotin \cite{DubreilJacotin34}.Thus, we present new independent variables
\[
q = x, \ \ p = \psi(x,y),
\]
while new unknown function $h(q,p)$ (height function) is defined from the identity
\[
h(q,p) = y.
\]
Note that it is related to the stream function $\psi$ through the formulas
\begin{equation} \label{height:stream}
\psi_x = - \frac{h_q}{h_p}, \ \ \psi_y = \frac{1}{h_p},
\end{equation}
where 
\begin{subequations}\label{height}
\begin{equation} \label{unih}
h_p > 0
\end{equation}
throughout the fluid domain by \eqref{uni}. An advantage of using new variables is in that instead of two unknown functions $\eta(x)$ and $\psi(x,y)$ with an unknown domain of definition, we have one function $h(q,p)$ defined in a fixed strip $S = \R \times (0,1)$. An equivalent problem for $h(q,p)$ is given by
	\begin{alignat}{2}
	\label{height:main}
	\left( \frac{1+h_q^2}{2h_p^2} + \Omega \right)_p - \left(\frac{h_q}{h_p}\right)_q &=0 &\qquad& \text{in } S,\\
	\label{height:top}
	\frac{1+h_q^2}{2h_p^2} +  h  &= r &\quad& \text{on }p=1,\\
	\label{height:bot} 
	h  &= 0 &\quad& \text{on }p=0.
	\end{alignat}
\end{subequations}
The wave profile $\eta$ becomes the boundary value of $h$ on $p = 1$:
\[
h(q,1) = \eta(q), \ \ q \in \R.
\]

Using \eqref{height:stream} and Bernoulli's law \eqref{eqn:bernoulli} we recalculate the flow force constant $\FF$ defined in \eqref{flowforce} as
\begin{equation}\label{height:ff}
\FF = \int_0^1 \left( \frac{1-h_q^2}{2h_p^2} - h - \Omega + \Omega(1) + r \right) h_p \, dp.
\end{equation}
Laminar flows defined by stream functions $U(y; s)$ correspond to height functions $h = H(p; s)$ that are independent of horizontal variable $q$. The corresponding equations are
\[
\left(\frac{1}{2H_p^2} + \Omega\right)_p = 0, \ \ H(0) = 0, \ \ H(1) = d(s), \ \ \frac{1}{2 H_p^2(1)} + H(1) = R(s).
\]
Solving equations for $H(p; s)$ explicitly, we find
\[
H(p;s) = \int_0^p \frac{1}{\sqrt{s^2 -2\Omega(\tau)}} \, d\tau.
\]
Given a height function $h(q,p)$ and a stream solution $H(p;s)$, we define
\begin{equation}\label{ws}
\ws(q,p) = h(q,p) - H(p;s).
\end{equation}
This notation will be frequently used in what follows. In order to derive an equation for $\ws$ we first write \eqref{height:main} in a non-divergence form as
\[
\frac{1+h_q^2}{h_p^2} h_{pp} - 2\frac{h_q}{h_p} h_{qp} + h_{qq} - \omega(p) h_p = 0.
\]
Now using our ansats \eqref{ws}, we find
\begin{equation}\label{ws:main}
\frac{1+h_q^2}{h_p^2} \ws_{pp} - 2\frac{h_q}{h_p} \ws_{qp} + \ws_{qq} - \omega(p) \ws_p + \frac{(\ws_q)^2 H_{pp}}{h_p^2} - \frac{\ws_p (h_p + H_p) H_{pp}}{h_p^2 H_p^2} = 0.
\end{equation}
Thus, $\ws$ solves a homogeneous elliptic equation in $S$ and is subject to a maximum principle; see \cite{Vitolo2007} for an elliptic maximum principle in unbounded domains. The boundary conditions for $\ws$ can be obtained directly from \eqref{height:top} and \eqref{height:bot} by inserting
\eqref{ws} and using the corresponding equations for $H$. This gives
\begin{subequations}\label{ws:boundary}
\begin{alignat}{2}
\frac{(\ws_q)^2}{2h_p^2} - \frac{\ws_p (h_p + H_p)}{2h_p^2 H_p^2} + \ws &=r- R(s) &\qquad& \text{for } p=1,\label{ws:top} \\ 
\ws &= 0&\qquad& \text{for } p=0. \label{ws:bot}
\end{alignat}
\end{subequations}
For $s = s_\pm(r)$, we have $r - R(s_\pm(r)) = 0$ and \eqref{ws:top} turns into
\begin{equation} \label{wspm}
\frac{\wspm_p}{H_p^3} - \wspm = \frac{(\wspm_q)^2}{2h_p^2} + \frac{(\wspm_p)^2(2h_p + H_p)}{2H_p^3 h_p^2}, \ \ p = 1.
\end{equation}
This shows that $\wspm_p(q; 1)$ is positive whenever $\wspm(q; 1)$ is positive. This property will be used in what follows.

I many formulas, such as \eqref{ws:top}, it is often convenient to omit the dependence on $s$ in the notation of $H$. The right choice of $s$ will be always clear from the context and is the same as for $\ws$. Furthermore, since the Bernoulli constant $r$ will remain unchanged, we will often omit it from notations, such as $s_\pm$ or $\FF_\pm$. 

\subsection{Subsolutions} Let $h \in C^{2,\gamma}(\overline{S})$ be a solution to \eqref{height} for some $r > 0$ and let $\FF$ be the
corresponding flow force constant. For an arbitrary sequence $\{q_j\}_{j=1}^\infty \subset \R$, possibly unbounded, we consider horizontal shifts
\[
h_j(q,p) = h(q+q_j,p), \ \ j \geq 1.
\]
Thus, every function $h_j$ solve the same problem \eqref{height} with the same Bernoulli constant. Now let $\gamma' \in (0,\gamma)$ be given. Then the embedding $C^{2,\gamma}(K) \hookrightarrow C^{2,\gamma'}(K)$ is compact for any compact subset $K \subset \overline{S}$. Because the norms $\|h_j\|_{C^{2,\gamma}(\overline{S})}$ are uniformly bounded in $j$, we can find a subsequence $\{h_{j_k}\}_{k=1}^\infty$ and a function $\tilde{h} \in C^{2,\gamma'}(\overline{S})$ with the following property: for any compact $K \subset \overline{S}$ restrictions of functions $h_{j_k}$ to $K$ converge to $\tilde{h}|_K$ in $C^{2,\gamma'}(K)$. Then it is straightforward to show that $\tilde{h}$ has the same regularity as $h$, that is $\tilde{h} \in C^{2,\gamma}(\overline{S})$. It is also clear that $\tilde{h}$ solves the same elliptic problem \eqref{height} with the same Bernoulli constant $r$ and has the same flow force constant $\FF$. Note that if the convergence takes place for some $\gamma'$ then it is true for all $\gamma' \in (0,\gamma)$ by the interpolation. Such function $\tilde{h}$ will be referred to as a subsolution of $h$. This terminology will be useful in order to avoid multiple repetitions of the argument with subsequences as above. Let us give an explicit definition.

\begin{definition} \label{defsub}
	Given two functions $h,\tilde{h} \in C^{2,\gamma}(\overline{S})$ we say that $\tilde{h}$ is a subsolution of $h$ if there exists a sequence  $\{q_j\}_{j=1}^\infty \subset \R$ such that functions $h_j(q, p) = h(q +q_j , p)$ converge to $\tilde{h}$ in
	$C^{2,\gamma'}(K)$ for all compact $K \subset \overline{S}$ and all $\gamma' \in (0,\gamma)$.
\end{definition}

Note that the definition is symmetric: $\tilde{h}$ is a subsolution of $h$ if and only if $h$ is a subsolution of $\tilde{h}$. The following property of subsolutions will be useful in what follows.

\begin{proposition} \label{subsub}
	Let $\tilde{h}\in C^{2,\gamma}(\overline{S})$ be a subsolution of $h\in C^{2,\gamma}(\overline{S})$ and $\hat{h}\in C^{2,\gamma}(\overline{S})$ is a subsolution of $\tilde{h}$. Then $\hat{h}$ is a subsolution of $h$.
\end{proposition}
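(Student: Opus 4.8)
The plan is to establish transitivity by a two-step approximation combined with a diagonal exhaustion of the strip. Write $T_a f(q,p) = f(q+a,p)$ for the horizontal shift, and record the elementary identity $\|T_a f\|_{C^{2,\gamma'}(K)} = \|f\|_{C^{2,\gamma'}(K+a)}$, where $K+a$ denotes the horizontal translate of the compact set $K$ (all derivatives and Hölder seminorms entering these norms are translation invariant in $q$). Fix once and for all an exponent $\gamma' \in (0,\gamma)$ and exhaust $\overline{S}$ by the compact sets $K_n = [-n,n]\times[0,1]$. By the interpolation remark preceding Definition \ref{defsub}, together with the fact that $\|T_t h\|_{C^{2,\gamma}(\overline{S})} = \|h\|_{C^{2,\gamma}(\overline{S})}$ for every shift, it suffices to produce a single sequence $\{t_n\}\subset\R$ along which $T_{t_n} h \to \hat{h}$ in $C^{2,\gamma'}(K)$ for every compact $K$; convergence for all other exponents follows from the uniform $C^{2,\gamma}(\overline{S})$ bound.

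First I would fix $n$ and use that $\hat{h}$ is a subsolution of $\tilde{h}$: Definition \ref{defsub} supplies a shift $a_n$, a term of the witnessing sequence, with
\[
\|T_{a_n}\tilde{h} - \hat{h}\|_{C^{2,\gamma'}(K_n)} < \tfrac{1}{2n}.
\]
With $a_n$ now frozen, $K_n + a_n$ is a fixed compact subset of $\overline{S}$. Since $\tilde{h}$ is a subsolution of $h$, Definition \ref{defsub} furnishes a shift $b_n$ with $\|T_{b_n} h - \tilde{h}\|_{C^{2,\gamma'}(K_n + a_n)} < \tfrac{1}{2n}$, and by the shift-invariance identity (applied to $f = T_{b_n}h - \tilde{h}$ with $a = a_n$) this reads
\[
\|T_{a_n + b_n} h - T_{a_n}\tilde{h}\|_{C^{2,\gamma'}(K_n)} < \tfrac{1}{2n}.
\]
Setting $t_n = a_n + b_n$ and applying the triangle inequality gives $\|T_{t_n} h - \hat{h}\|_{C^{2,\gamma'}(K_n)} < 1/n$.

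It then remains to check that $\{t_n\}$ does the job: any compact $K \subset \overline{S}$ satisfies $K \subset K_n$ for all large $n$, so $\|T_{t_n} h - \hat{h}\|_{C^{2,\gamma'}(K)} \le \|T_{t_n} h - \hat{h}\|_{C^{2,\gamma'}(K_n)} < 1/n \to 0$. This is exactly the assertion that $\hat{h}$ is a subsolution of $h$.

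The only delicate point is the ordering of the quantifiers in the two-step approximation. The second shift $b_n$ must be selected after $a_n$ and, crucially, to approximate $T_{a_n}\tilde{h}$ by shifts of $h$ one needs the convergence $T_b h \to \tilde{h}$ on the translated domain $K_n + a_n$ rather than on $K_n$ itself, since the first estimate $T_{a_n}\tilde{h} \approx \hat{h}$ lives on $K_n$. Because $a_n$ is held fixed before $b_n$ is chosen, $K_n + a_n$ is a genuine fixed compact set and Definition \ref{defsub} applies to it directly; the shift-invariance identity then makes the two estimates compose cleanly. Beyond this bookkeeping I do not expect any essential obstacle.
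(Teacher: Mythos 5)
Your proof is correct and takes essentially the same route as the paper's: a two-step approximation (first $\hat{h}\approx T_{a}\tilde{h}$, then $T_{a}\tilde{h}\approx T_{a+b}h$) composed via the triangle inequality, with the shifts aligned through translation invariance of the norms. If anything, yours is slightly more complete, since you explicitly extract the single diagonal sequence $\{t_n\}$ on the exhaustion $K_n=[-n,n]\times[0,1]$ that Definition \ref{defsub} demands, a bookkeeping step the paper's proof leaves implicit.
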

\begin{proof}
	Let us fix $\epsilon>0$ and a bounded closed interval $I$. Then, because $\tilde{h}$ is a subsolution of $\hat{h}$, then $\hat{h}$ is a subsolution of $\tilde{h}$ and  one finds $\hat{q} \in \R$ such that the function $\tilde{h}(q+\hat{q})$ is close to $\hat{h}(q)$ on $I$ in $C^{2,\gamma'}(I \times [0,1])$, that is
	\[
	\|\tilde{h}(\cdot+\hat{q}) - \hat{h}(\cdot)\|_{C^{2,\gamma'}(I \times [0,1])} < \epsilon/2.
	\]
	Now we consider functions $\tilde{h}(\cdot+\hat{q})$ and $h$. It is clear that $h$ is a subsolution of $\tilde{h}(\cdot+\hat{q})$ and a similar argument gives $\tilde{q} \in \R$ such that
	\[
	\|h(\cdot+\tilde{q}) - \tilde{h}(\cdot+\hat{q})\|_{C^{2,\gamma'}(I \times [0,1])} < \epsilon/2.
	\]
	Combining two inequalities together, we conclude that for any $\epsilon > 0$ and any interval $I$ there exists $\tilde{q} \in \R$ such that
	\[
	\|h(\cdot+\tilde{q}) - \hat{h}(\cdot)\|_{C^{2,\gamma'}(I \times [0,1])} < \epsilon.
	\]	
	This shows that $\hat{h}$ is a subsolutions of $h$.
\end{proof}

\subsection{General bounds for solutions} As noted by Keady and Norbury \cite{KeadyNorbury78}, the bounds 
\[
d_-(r) \leq \check{\eta} \leq d_+(r) \leq \hat{\eta}
\]
are closely related to the Benjamin and Lighthill conjecture about the 
ow force. Here
\[
\check{\eta} = \inf_\R \eta, \ \ \hat{\eta} = \sup_\R \eta.
\]
For periodic waves all inequalities are strict and the case of equalities is quite delicate and is indirectly contained in claims (ii) and (iii) of Theorem \ref{thm:BLC}. 

Let us recall a precise statement about bounds for the surface profile, mainly borrowed from \cite{Kozlov2015}, that will be used in our proofs.

\begin{proposition} \label{p:bounds} Let $(\psi,\eta)$ be as in Theorem \ref{thm:BLC} with $r \in (R_c,R_0)$. Then the following is true:
	\begin{itemize}
		\item[(i)] for all $x\in\R$ we have $\eta(x) > d_-(r)$; furthermore, if $\check{\eta} = d_-(r)$, then $\FF = \FF_-(r)$;
		\item[(ii)] $\hat{\eta} \geq d_+(r)$ and if the equality holds true, then $\FF = \FF_+(r)$;
		\item[(iii)] $\check{\eta} \leq d_+(r)$ and $\check{\eta} = d_+(r)$ implies $\FF = \FF_+(r)$.
	\end{itemize}
\end{proposition}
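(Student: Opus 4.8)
The plan is to establish Proposition~\ref{p:bounds} by applying the elliptic maximum principle to the shifted height-difference functions $\ws = h - H(\cdot\,;s)$ introduced in \eqref{ws}, exploiting that each $\ws$ solves the homogeneous elliptic equation \eqref{ws:main} and hence obeys a maximum principle on the strip $S$. The crucial structural input is the boundary relation \eqref{wspm}, valid for the conjugate speeds $s = s_\pm(r)$, which forces $\wspm_p(q,1)$ to be positive wherever $\wspm(q,1)$ is positive. This sign-propagation lets me convert information about the interior or boundary sign of $\ws$ into definite conclusions about whether $\check\eta$ or $\hat\eta$ can touch the conjugate depths $d_\pm(r)$.

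For part~(i), I would argue by contradiction: suppose $\eta(x_0) \le d_-(r)$ for some $x_0$. Recall $d_-(r) = d(s_+(r))$, so $H(1;s_+) = d_-(r)$ and the relevant difference function is $w^{(s_+)}$, whose bottom value is $0$ by \eqref{ws:bot} and whose boundary value on $p=1$ equals $\eta - d_-(r)$. If $\eta$ dips to or below $d_-(r)$, then $w^{(s_+)}$ attains a nonpositive maximum-type value on the top boundary; the strong maximum principle applied to \eqref{ws:main}, together with the boundary derivative condition extracted from \eqref{wspm}, should rule out an interior or boundary touching unless $w^{(s_+)} \equiv 0$, i.e.\ the solution is the supercritical stream. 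To handle the possibly unbounded geometry and the fact that the supremum/infimum need not be attained, I would pass to a subsolution $\tilde h$ in the sense of Definition~\ref{defsub}, chosen along a sequence $q_j$ with $\eta(q_j) \to \check\eta$; by Proposition~\ref{subsub} and the translation-invariance of the problem, $\tilde h$ solves the same system with the same $\FF$ and attains the infimum, so the maximum principle applies cleanly at an actual boundary point. The equality clause---$\check\eta = d_-(r)$ implies $\FF = \FF_-(r)$---then follows because the touching forces $w^{(s_+)} \equiv 0$ on the subsolution, making $\tilde h = H(\cdot\,;s_+)$ the supercritical stream, whose flow force is exactly $\FF_-(r)$; since $\FF$ is preserved under passing to subsolutions, $\FF = \FF_-(r)$ for the original wave.

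Parts~(ii) and~(iii) concern the subcritical depth $d_+(r) = d(s_-(r))$, so here the relevant function is $w^{(s_-)}$ with $H(1;s_-) = d_+(r)$. For part~(iii), the inequality $\check\eta \le d_+(r)$ should come from integrating or averaging the height equation against the stream profile, or equivalently from testing the flow-force identity \eqref{height:ff} against the conjugate stream and using the variational characterization of $d_\pm$; the equality case again forces $w^{(s_-)}$ to vanish identically on a suitable subsolution via the maximum principle with the sign condition from \eqref{wspm}, yielding the subcritical stream and hence $\FF = \FF_+(r)$. For part~(ii), the bound $\hat\eta \ge d_+(r)$ is the complementary statement, proved by the same machinery applied to $\sup$ instead of $\inf$, with the touching case once more collapsing the solution to the subcritical stream through the maximum principle.

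I expect the main obstacle to be the non-compactness of the strip $S$: because $\check\eta$ and $\hat\eta$ are an infimum and a supremum over all of $\R$, they may fail to be attained, so the maximum principle cannot be invoked directly on $h$ itself. The subsolution framework of Definition~\ref{defsub} and Proposition~\ref{subsub} is precisely the device that circumvents this, by producing a genuine solution on which the extremum is realized while preserving both the equation and the flow force constant. The second delicate point is the interplay between the interior strong maximum principle and the oblique boundary condition \eqref{wspm}: I must ensure that a nonpositive interior maximum touching the top boundary is incompatible with the forced sign of $\wspm_p$ there unless $\ws$ is constant, which is where the Hopf boundary-point lemma, adapted to the boundary relation derived from \eqref{wspm}, will do the decisive work.
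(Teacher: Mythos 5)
Your handling of the flow-force equality clauses (the ``furthermore'' statements) is essentially the paper's own proof: pass to a subsolution along a sequence realizing the touching, obtain a sign for $w^{(s_\pm)}$ from the maximum principle, and play the Hopf lemma against the boundary identity \eqref{wspm}, which at a point where $\wspm = \wspm_q = 0$ forces $\wspm_p \geq 0$; the subsolution then collapses to the conjugate stream, and $\FF = \FF_\mp(r)$ follows because the flow force constant is preserved under passing to subsolutions. Note, however, that this is \emph{all} the paper proves here: the profile inequalities themselves are not re-derived but quoted directly from \cite{Kozlov2015}.

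The genuine gap is that your proposal also undertakes to prove those inequalities, and the mechanism you describe cannot do it. Take part (i) and suppose $\eta$ dips \emph{strictly} below $d_-(r)$ somewhere. First, you cannot initialize the maximum-principle step: concluding $w^{(s_+)} \geq 0$ in $S$ requires $w^{(s_+)} \geq 0$ on $p=1$, i.e.\ $\eta \geq d_-(r)$, which is precisely the statement under proof, so the argument is circular at the outset. Second, if instead you argue at a (subsolution-realized) strictly negative minimum of $w^{(s_+)}(\cdot,1)$ at some point $(q_0,1)$, you get $w^{(s_+)}_q(q_0,1) = 0$ and, by Hopf, $w^{(s_+)}_p(q_0,1) < 0$; but \eqref{wspm} at that point reads
\[
\frac{w^{(s_+)}_p}{H_p^3} \;=\; w^{(s_+)} + \frac{\bigl(w^{(s_+)}_p\bigr)^2(2h_p+H_p)}{2H_p^3 h_p^2},
\]
a negative quantity equal to a negative quantity plus a nonnegative one --- perfectly consistent, no contradiction. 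The sign-propagation you invoke ($\wspm_p > 0$ wherever $\wspm > 0$) is one-sided: \eqref{wspm} gives usable information only where $\wspm \geq 0$, and in particular at exact touching points, which is why it settles the equality cases but not the bounds themselves. The actual proofs of the bounds (as in \cite{Kozlov2015}) rest on a different comparison: one takes the stream $H(\cdot\,;s)$ whose depth $d(s)$ equals $\check\eta$ (respectively $\hat\eta$), so that $\ws$ touches zero from above (respectively below) at the surface; the boundary condition \eqref{ws:top} at the touching point then forces $r - R(s) \geq 0$ (respectively $\leq 0$), and the monotonicity of $R(\cdot)$ and $d(\cdot)$ converts this into $d_-(r) \leq \check\eta \leq d_+(r)$ and $\hat\eta \geq d_+(r)$ for non-stream solutions. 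Your proposed substitutes for (ii)--(iii) --- ``integrating the height equation against the stream profile'' and a ``variational characterization of $d_\pm$'' --- correspond to nothing in the paper's framework and do not constitute an argument.

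A smaller, fixable point: in the maximum-touching case (ii), Hopf gives $\wspm_p(q_0,1) > 0$, and \eqref{wspm} by itself admits the positive root $\wspm_p = 2h_p^2/(2h_p+H_p)$, so there is no immediate contradiction as in the minimum case. That root is excluded only after substituting $h_p = H_p + \wspm_p$, which forces $\wspm_p = -2H_p$ and hence $h_p = -H_p < 0$, contradicting \eqref{unih}. The paper buries this under ``a similar argument''; a complete write-up --- yours or the paper's --- should supply it.
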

\begin{proof} The part of the statement about bounds for the surface profile was proved in \cite{Kozlov2015} and we only need to verify the remaining part about the flow force constant. Let $h$ be the height function corresponding to $\psi$,  defined in Section 3.1. 
	
Assume that $\check{\eta} = d_-(r)$. Then there exists an unbounded sequence $\{q_j\}_{j=1}^\infty$ such that $w^{(s_+)}(q_j,1) \to 0$, where $w^{(s_+)}$ is defined by \eqref{ws} with $s = s_+(r)$. Let $\tilde{w}$ be the corresponding subsolution of $w$. Then $\wtilde(0,1) = 0$ by the construction, while $\wtilde \geq 0$ in $S$, which follows from a similar property for $w^{(s_+)}$. Note that $w^{(s_+)} \geq 0$ by the maximum principle, since $\check{\eta} \geq d_-(r)$ and the boundary condition \eqref{ws:bot} show that $w^{(s_+)} \geq 0$ along the boundary of $S$. Therefore, $\wtilde \geq 0$ in $S$ as a limit of nonnegative functions. We claim that $\wtilde = 0$ identically in $S$. Indeed, if it is not the case, then the Hopf lemma would give $\wtilde_p(0,1) < 0$; note that $\wtilde$ solve the same elliptic problem and the maximum principle is applicable. But this is in a contradiction with the relation \eqref{wspm}, which holds for $\wtilde$ instead of $\wspm$. The latter identity computed at $q=0$ gives $\wtilde_p(0,1) \geq 0$, since $\wtilde(0,1) = \wtilde_q(0,1) = 0$. Thus, we proved that $\wtilde = 0$ in $S$ and then $\FF = \FF_-(r)$.

A similar argument with subsolutions works for the cases $\hat{\eta} = d_+(r)$ and $\check{\eta} = d_-(r)$.
\end{proof}

\begin{proposition} \label{p:wp}
	Let $h \in C^{2,\gamma}(\overline{S})$ be a solution to \eqref{height} with $r \in (R_c,R_0)$. Then there exists a stream solution $H(p;s)$ with $s \in (s_0,s_-(r))$ such that $\sup_{q \in \R} h_p(q,0) < H_p(0;s)$.
\end{proposition}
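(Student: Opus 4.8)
The plan is to translate the statement into a uniform lower bound for the Bernoulli function on the bottom, and then to extract this bound from an elliptic minimum principle. Since $\Omega(0)=0$ we have $H_p(0;s)=(s^2-2\Omega(0))^{-1/2}=1/s$, and because $s_0<s_-(r)$ the assertion is equivalent to the single estimate $\sup_q h_p(q,0)<1/s_0$: once this holds, any $s$ in the nonempty interval $(s_0,\min\{s_-(r),(\sup_q h_p(q,0))^{-1}\})$ satisfies $H_p(0;s)=1/s>\sup_q h_p(q,0)$, as required. Write $M=\sup_q h_p(q,0)$, which is finite since $\inf_D\psi_y>0$ and $h_p=1/\psi_y$ is bounded. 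On the bottom $\psi_x=v=0$, hence $h_q(q,0)=0$, so $M<1/s_0$ is in turn equivalent to the uniform interior bound
\[
E(q,0)=\frac{1}{2h_p(q,0)^2}>\frac{s_0^2}{2}=\max_{[0,1]}\Omega,\qquad E:=\tfrac12\abs{\nabla\psi}^2+\Omega(\psi),
\]
where in the hodograph variables $E=\tfrac{1+h_q^2}{2h_p^2}+\Omega(p)$.

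First I would record two structural facts about $E$. On the one hand, the absence of stagnation gives the pointwise strict inequality $E>\Omega(\psi)$; evaluated along the critical streamline $\psi=p_\star$, where $p_\star\in[0,1]$ maximises $\Omega$, this yields $E>\Omega(p_\star)=s_0^2/2$. On the other hand, a direct computation from $\Delta\psi+\omega(\psi)=0$ gives $\Delta E=-2\det\mathrm{Hess}\,\psi$ and $\mathrm{Hess}\,\psi\,\nabla E=-(\det\mathrm{Hess}\,\psi)\nabla\psi$, whence
\[
\tfrac12\abs{\nabla\psi}^2\,\Delta E=\abs{\nabla E}^2-\omega\,\nabla\psi\cdot\nabla E .
\]
Wherever $\nabla\psi\neq0$ this is a homogeneous second-order elliptic equation $\Delta E+b\cdot\nabla E=0$ with a locally bounded drift $b$ and \emph{no} zeroth-order term, so $E$ obeys the strong maximum principle and the Hopf lemma. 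Moreover, since the bottom is flat, $\psi_x\equiv0$ there gives $\psi_{xx}=0$, and $\Delta\psi=-\omega$ yields $E_y=\psi_y(\psi_{yy}+\omega)=-\psi_y\psi_{xx}=0$ on $y=0$; equivalently $E_p(q,0)\equiv0$.

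With these in hand I would run a minimum-principle argument on the sub-domain below the critical streamline, in hodograph form $S_\star=\R\times(0,p_\star)$ (the case $p_\star=1$ is identical, with the free surface as the upper boundary, where $E=r-\eta+\Omega(1)\ge\Omega(1)+\tfrac{1}{2M^2}>s_0^2/2$ because $\eta\le r-\tfrac{1}{2M^2}<r$). Suppose, for contradiction, that $\inf_q E(q,0)=L\le s_0^2/2$. Choosing $q_j$ with $E(q_j,0)\to L$ and passing to a subsolution $\tilde h$ in the sense of Definition \ref{defsub}, one obtains a solution of the same problem whose Bernoulli function $\tilde E$ satisfies $\tilde E(0,0)=L$ and $\tilde E(\cdot,0)\ge L$ on the bottom, while $\tilde E>s_0^2/2\ge L$ on $p=p_\star$ and $\tilde E_p(\cdot,0)\equiv0$. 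By the maximum principle on the (bounded-in-$p$) strip (cf. \cite{Vitolo2007}) the minimum of $\tilde E$ over $\overline{S_\star}$ equals $L$ and is attained at the bottom point $(0,0)$, and $\tilde E$ is nonconstant; the Hopf lemma then forces the outward normal derivative to be negative, i.e. $\tilde E_p(0,0)>0$, contradicting $\tilde E_p(0,0)=0$. Hence $\inf_q E(q,0)>s_0^2/2$, that is $M<1/s_0$, and the proof concludes by the choice of $s$ described above.

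The main obstacle I anticipate is the second structural fact: verifying that $E$ genuinely solves a homogeneous elliptic equation with no zeroth-order term, so that both the strong maximum principle and the Hopf lemma are available. This is precisely where the semilinear structure $\Delta\psi+\omega(\psi)=0$ and the absence of stagnation points are essential, and it is the step that must be carried out with care. Everything else---the reduction to $M<1/s_0$, the flat-bottom identity $E_p(q,0)\equiv0$, and the extraction of a minimiser on the unbounded strip via subsolutions---is then routine.
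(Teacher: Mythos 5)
Your proposal is correct in substance, but it follows a genuinely different route from the paper's. The paper argues by comparison with stream solutions themselves: it applies the maximum principle and the Hopf lemma to $\ws = h - H(p;s)$ via equation \eqref{ws:main}, and splits into three cases ($s_0=0$; $d_0=+\infty$; $s_0>0$ with $d_0<+\infty$), the last of which invokes the classification of vorticity functions from \cite{Kozlov2015}. You instead reduce the claim to the single uniform bound $\sup_q h_p(q,0)<1/s_0$ and prove it through the Bernoulli function $E=\tfrac12\abs{\nabla\psi}^2+\Omega(\psi)$: the homogeneous elliptic equation for $E$, the Neumann identity $E_y=0$ on the flat bottom, the uniform bound $E\geq \max_{[0,1]}\Omega + \tfrac12(\inf_D\psi_y)^2$ on the critical streamline (or on the free surface when $p_\star=1$), and then translation compactness (subsolutions) combined with a Phragm\'en--Lindel\"of principle and the Hopf lemma. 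This buys uniformity in the vorticity---no case analysis, no appeal to the classification of stream solutions---and yields a sharper quantitative conclusion (any $s\in(s_0,\min\{s_-(r),1/M\})$ works). The one step that genuinely needs care is exactly the one you flagged: the identity $\Delta E=-2\det\mathrm{Hess}\,\psi$ involves third derivatives of $\psi$, whereas the paper works with $\omega\in C^\gamma([0,1])$, hence only $\psi\in C^{2,\gamma}$ and $E\in C^{1,\gamma}$. This gap is fillable: both sides of the identity are stable under $C^2_{\mathrm{loc}}$ convergence, so mollifying $\psi$ gives the identity in the distributional sense; since the right-hand side lies in $C^\gamma_{\mathrm{loc}}$, Schauder estimates then upgrade $E$ to $C^{2,\gamma}_{\mathrm{loc}}$ in the interior, which together with $E\in C^{1,\gamma}$ up to the bottom is all that the strong minimum principle and the Hopf lemma require; your purely algebraic identity $\mathrm{Hess}\,\psi\,\nabla E=-(\det\mathrm{Hess}\,\psi)\nabla\psi$ (Cayley--Hamilton) involves only second derivatives and is fine as stated. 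Two cosmetic points: the uniform lower bound on the upper boundary of $S_\star$ should be expressed through $\sup_{\overline S}h_p$ rather than your $M$ (which is the supremum over the bottom only), and boundedness of the drift $b=-2\abs{\nabla\psi}^{-2}\,\mathrm{Hess}\,\psi\,\nabla\psi$ uses both $\inf_D\psi_y>0$ and finiteness of the $C^{2,\gamma}$ norm of $\psi$---both available under the paper's hypotheses.
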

\begin{proof}
	If $s_0 = 0$, then the statement is trivial, since then $H_p(0;s) = 1/s \to +\infty$ as $s \to s_0$. If $d_0 = +\infty$, then we choose $s \in (s_0,s_-(r))$ sufficiently small so that $\ws < 0$ on $p=0$. Then $\ws < 0$ everywhere in $S$ by the maximum principle and so $\ws_p < 0$ on $p=0$ by the Hopf lemma. The remaining case $s_0 > 0$ and $d_0 < +\infty$ requires that $H_p(1;s) \to 0$ as $s \to +\infty$ (follows from the classification of vorticity functions in \cite{Kozlov2015}). Then we choose  $s \in (s_0,s_-(r))$ for which $\ws_p < 0$ on $p=1$. This requires $\ws < 0$ on $p=0$ (as follows from the Hopf lemma) and then $\ws_p < 0$ on $p=0$ as before.
\end{proof}

\subsection{Asymptotics for solitary waves}

A solitary wave solution to \eqref{eqn:stream} is defined by an asymptotic relation
\begin{equation} \label{sol}
\lim_{x \to \pm\infty} \eta(x) = d.
\end{equation}
For unidirectional waves it guarantees that the corresponding height function $h$ has a subsolution, which is a laminar flow with the depth $d$. In particular, this requires
\[
d = d_-(r) \ \ \text{or} \ \ d = d_+(r),
\]
where $r$ is the Bernoulli constant. It was recently proved in \cite{KozLokhWheeler2020} that even one-sided, assumption \eqref{sol} requires $d = d_-(r)$; that is all solitary waves are subcritical (supported by subcritical laminar flows $H(p;s)$ with $s>s_c$). Furthermore, one verifies that
\begin{equation} \label{solder}
h(q,p) \to H(p;s_-(r)), \ \ h_q(q,p) \to 0, \ \ q\to \pm \infty
\end{equation}
in $C^{2,\gamma'}([0,1])$ and $C^{1,\gamma'}([0,1])$ respectively for all $\gamma' \in (0,\gamma)$, provided $h \in C^{2,\gamma}(\overline{S})$. Asymptotics \eqref{solder} show that $\FF = \FF_-(r)$, which follows from \eqref{height:ff} by passing to the limit $q \to +\infty$. All these considerations are valid even if we assume that \eqref{sol} holds true only at the positive infinity. 

In order to obtain higher order asymptotics for $h$, we need to introduce the following eigenvalue problem:
\[
- \left( \frac{\varphi_p}{H_p^3} \right)_p = \mu \frac{\varphi}{H_p}, \ \ p \in [0,1],
\]
where $H = H(p;s_+(r))$ and the eigenfunction $\varphi(p)$ is subject to the boundary conditions
\[
\varphi(0) = 0, \ \ \varphi_p(1) = H_p^3(1)\varphi(1).
\]
This Sturm-Liouville problem arises as a form of the dispersion relation; see \cite{ConstantinStrauss04}. The first eigenvalue $\mu_1 = \lambda_1^2 > 0$ is always positive, provided $H = H(p;s_+(r))$ is a supercritical stream solution. This suggests that the difference $h-H$ must decay as $e^{-\lambda_1 q}$ as $q \to +\infty$. A precise statement is given below.

\begin{proposition} \label{solasymp} Let $h \in C^{2,\gamma}(\overline{S})$ be a solution to \eqref{height} and satisfy $\lim_{q \to +\infty} h(q,1) = d_-(r)$. Then
\[
h(q,p) = H(p;s_-(r)) + a \varphi_1(p) e^{-\lambda_1 q} + f(q,p) e^{-\lambda_1'q}, \ \ (q,p) \in S,
\] 
where $a \neq 0$, $\lambda_1' > \lambda_1$ and $f \in C^{2,\gamma}(\overline{S})$.
\end{proposition}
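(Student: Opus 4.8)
The plan is to read off the leading asymptotics of $w := h - H$ from the spectrum of the operator obtained by linearising the interior equation \eqref{ws:main} and the boundary relation \eqref{wspm} at $w=0$; here $H$ denotes the supercritical stream of depth $d_-(r)$ appearing in \eqref{solder}, and $(\lambda_1,\varphi_1)$ is the first eigenpair of the Sturm--Liouville problem stated just before the proposition. By \eqref{solder} we already have $w\to 0$ and $w_q\to 0$ as $q\to+\infty$, and by Proposition \ref{p:bounds}(i) we have $\eta>d_-(r)$, so $w\ge 0$ on $p=1$, $w=0$ on $p=0$, whence $w\ge 0$ in $S$ by the maximum principle; we may assume $w\not\equiv 0$, since otherwise $h$ is laminar and there is nothing to prove. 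Writing \eqref{ws:main}--\eqref{wspm} as $\mathcal L w = N(w)$, where $\mathcal L$ is obtained by replacing $h_p$ by $H_p$ and $h_q$ by $0$ and $N(w)=O(|w|^2+|\nabla w|^2)$ collects the quadratic and higher-order terms, a direct computation using $\omega=H_{pp}/H_p^3$ puts the linear part in divergence form, $H_p^{-1}\mathcal L w = \big(w_p/H_p^3\big)_p + H_p^{-1}w_{qq}$. Substituting $w=\varphi(p)e^{-\lambda q}$ into $\mathcal L w=0$ therefore reproduces exactly the stated Sturm--Liouville problem with $\mu=\lambda^2$. This problem is self-adjoint in $L^2\big((0,1);H_p^{-1}\,dp\big)$, its eigenvalues $0<\mu_1<\mu_2<\cdots$ are simple with $\mu_1=\lambda_1^2>0$, and $\varphi_n$ has exactly $n-1$ interior zeros, so $\varphi_1>0$ on $(0,1]$ while every $\varphi_n$ with $n\ge 2$ changes sign.

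First I would establish exponential decay. Since $\mu_1>0$ there is a spectral gap, and for $0<\lambda<\lambda_1$ one computes $\mathcal L(\varphi_1 e^{-\lambda q}) = (\lambda^2-\lambda_1^2)\varphi_1 e^{-\lambda q}$, which has a favourable sign; hence $C\varphi_1(p)e^{-\lambda q}$ is a supersolution of the nonlinear problem on $[q_0,\infty)\times[0,1]$ once $q_0$ is large enough that $w$ is small there and $N(w)$ is dominated by the linear part. Comparison then yields $0\le w\le Ce^{-\lambda q}$ for every $\lambda<\lambda_1$.

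Then I would pass to the spatial-dynamics formulation, treating $q$ as time and $U=(w,w_q)$ as the unknown in a Hölder space over $p\in[0,1]$ that encodes the boundary conditions. The equation takes the form $U_q=AU+F(U)$, where $A$ has compact resolvent, spectrum $\{\pm\lambda_n\}$ and an exponential dichotomy, and $F(U)=O(\|U\|^2)$. Since $U(q)\to 0$, the trajectory lies on the stable manifold, and the standard asymptotic expansion of such trajectories gives
\[
w(q,p) = a\,\varphi_1(p)\,e^{-\lambda_1 q} + f(q,p)\,e^{-\lambda_1' q},
\]
for any $\lambda_1'\in\big(\lambda_1,\min(2\lambda_1,\lambda_2)\big)$, with $f$ bounded; the cutoff $2\lambda_1$ reflects the quadratic nonlinearity, and Schauder estimates upgrade $f$ to $C^{2,\gamma}(\overline S)$. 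The coefficient $a$ can be isolated more elementarily by projecting onto $\varphi_1$: with $c_1(q)=\int_0^1 w\,\varphi_1 H_p^{-1}\,dp$, integration by parts against the linearised (Robin) boundary condition cancels the boundary contributions and leaves $c_1''-\lambda_1^2 c_1 = O(e^{-2\lambda q})$, whose decaying solution is $a\,e^{-\lambda_1 q}$ plus faster-decaying terms.

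It remains to show $a\ne 0$, which I expect to be the main obstacle and which is exactly where positivity enters. Let $n_0$ be the first index with nonvanishing modal coefficient; such $n_0$ exists, for otherwise $w$ would decay faster than every $e^{-\lambda_n q}$ and backward uniqueness on the stable manifold would force $w\equiv 0$, contrary to $w\not\equiv 0$. Then $e^{\lambda_{n_0}q}w(q,\cdot)\to(\mathrm{const})\,\varphi_{n_0}$, so the normalised profiles $w(q,\cdot)/\|w(q,\cdot)\|$ converge to $\pm\varphi_{n_0}$. Since $w\ge 0$ this limit is nonnegative; but for $n_0\ge 2$ the eigenfunction $\varphi_{n_0}$ changes sign, a contradiction. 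Hence $n_0=1$, the limiting profile is the positive function $\varphi_1$, and therefore $a>0$, which completes the argument. The one genuinely delicate technical point is the rigorous exponential-dichotomy expansion for the quasilinear operator $\mathcal L+N$ in Hölder spaces; the uniform Schauder bounds and compactness used above in the construction of subsolutions, together with the sign constraint $w\ge 0$, are the essential ingredients.
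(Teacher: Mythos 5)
Your proposal takes a genuinely different route from the paper, for the simple reason that the paper does not prove this proposition at all: its entire proof is one sentence citing \cite{Hur07} and remarking that the two-sided hypothesis there (convergence of $h(q,1)$ to $d_-(r)$ as $q\to-\infty$ as well) is inessential. What you wrote is a self-contained reconstruction along the lines the cited reference itself uses: linearize about the conjugate stream of depth $d_-(r)$ (you correctly resolve the paper's notational slip --- since $d_-(r)=d(s_+(r))$, the relevant stream is the supercritical one); verify that the linearized operator in divergence form is $(w_p/H_p^3)_p + H_p^{-1}w_{qq}$ using $\omega=H_{pp}/H_p^3$, so that separation of variables reproduces exactly the Sturm--Liouville problem stated before the proposition; extract exponential decay from the spectral gap $\mu_1=\lambda_1^2>0$; obtain the two-term expansion from an exponential dichotomy for the first-order system in $q$; and finally combine positivity of $w=h-H$ (Proposition \ref{p:bounds}(i) plus the maximum principle) with oscillation theory to force $a>0$. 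This last step is the most valuable part of your write-up: the nonvanishing of $a$, which the paper inherits silently from the citation, is here explained by the fact that the higher eigenfunctions $\varphi_n$, $n\geq 2$, change sign while $w\geq 0$; moreover your argument only ever uses the hypothesis at $+\infty$, which is precisely the ``minor modification'' the paper alludes to without carrying out.

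Two steps are, however, sketchier than your own closing caveat admits. First, the comparison argument for decay: when the quasilinear problem \eqref{ws:main}, \eqref{wspm} is written as $\mathcal{L}w=N(w)$, the remainder $N(w)$ is not $O(|w|^2+|\nabla w|^2)$ --- it contains coefficient differences multiplying the second derivatives $w_{pp}$, $w_{qp}$, and these terms do not vanish at $p=0$, whereas your barrier $C\varphi_1(p)e^{-\lambda q}$ does; also on $p=1$ the barrier satisfies the linearized Robin condition with equality, leaving no slack against the quadratic boundary terms. The standard repair (a differential inequality for $\int_0^1 w^2H_p^{-1}\,dp$, or barriers built from eigenfunctions of perturbed Sturm--Liouville problems that stay bounded below, combined with interior Schauder estimates controlling $|D^2w|$ by $\sup|w|$ over unit boxes) is routine but must replace the pointwise comparison as stated. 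Second, ``backward uniqueness on the stable manifold'' for a superexponentially decaying solution of a quasilinear elliptic problem on a half-cylinder is an Agmon--Nirenberg-type unique-continuation statement, not a triviality. Neither issue breaks the approach --- they are exactly the infrastructure that \cite{Hur07} supplies --- but they are the places where your sketch needs genuine work to become a proof.
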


These asymptotics were proved in \cite{Hur07}, under an additional assumption $\lim_{q \to -\infty} h(q,1) = d_-(r)$. However this is not essential and  proofs from \cite{Hur07} are applicable with minor modifications, so we omit it here.

\subsection{Auxiliary functions $\sigma$ and $\kappa$}

For a given $r > R_c$ and $s > s_0$ we define
\begin{equation} \label{sigma}
	\sigma(s;r) = \int_0^1 \left( \frac{1}{2H_p^2(p;s)} - H(p;s) - \Omega(p) + \Omega(1) + r \right) H_p(p;s) \,dp.
\end{equation}
This expression coincides with the flow force constant for $H(p; s)$, but with the Bernoulli constant $R(s)$ replaced by $r$. We also note that
\[
\sigma(s_\mp(r);r) = \FF_\pm(r).
\]
The key property of $\sigma(s; r)$ is stated below.

\begin{lemma} \label{lemma:sigma} For a given $r \in (R_c,R_0)$ the function $s \mapsto \sigma(s;r)$ increases for $s \in (s_0,s_-(r))$, decreases for $s \in (s_-(r),s_+(r))$ and increases to infinity for $s \in (s_+(r),+\infty)$.
\end{lemma}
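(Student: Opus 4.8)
The plan is to show that the derivative $\partial_s\sigma(s;r)$ factors in a remarkably clean way, namely
\[
\partial_s\sigma(s;r)=s\,(R(s)-r)\int_0^1\bigl(s^2-2\Omega(p)\bigr)^{-3/2}\,dp,
\]
after which the lemma is immediate. Since $s>s_0\ge 0$ and the integral on the right is strictly positive, the sign of $\partial_s\sigma$ coincides with the sign of $R(s)-r$. Recalling that $R$ decreases from $R_0$ to $R_c$ on $(s_0,s_c)$ and increases back to $+\infty$ on $(s_c,+\infty)$, and that $s_-(r)<s_c<s_+(r)$ are the two roots of $R(s)=r$ (which exist precisely because $r\in(R_c,R_0)$), we read off $R(s)>r$ on $(s_0,s_-(r))$, $R(s)<r$ on $(s_-(r),s_+(r))$, and $R(s)>r$ on $(s_+(r),+\infty)$. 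This yields exactly the claimed pattern of monotonicity.

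To establish the displayed identity I would set $\rho(p)=s^2-2\Omega(p)$, so that $H_p=\rho^{-1/2}$ and $\tfrac1{2H_p^2}=\tfrac\rho2$, giving
\[
\sigma(s;r)=\int_0^1\Bigl(\tfrac\rho2-H-\Omega+\Omega(1)+r\Bigr)\rho^{-1/2}\,dp,
\]
and then differentiate under the integral sign. The two elementary pieces are $\partial_s(\rho/2)=s$ and $\partial_s\rho^{-1/2}=-s\rho^{-3/2}$; the only delicate term is $\partial_s H(p;s)=-s\int_0^p\rho^{-3/2}\,d\tau=:-sG(p)$. Collecting terms, the integrand of $\partial_s\sigma$ reduces to $s\bigl[\tfrac12\rho^{-1/2}+G\rho^{-1/2}+(H+\Omega-\Omega(1)-r)\rho^{-3/2}\bigr]$.

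The key manipulation is to treat the term $\int_0^1 G\rho^{-1/2}\,dp=\int_0^1 G H_p\,dp$ by integration by parts: since $G'=\rho^{-3/2}$ and $G(0)=H(0)=0$, this equals $G(1)\,d(s)-\int_0^1 H\rho^{-3/2}\,dp$, and the second term cancels exactly the $\int_0^1 H\rho^{-3/2}\,dp$ already present. Substituting $\Omega=(s^2-\rho)/2$ turns $\int_0^1\Omega\rho^{-3/2}\,dp$ into $\tfrac{s^2}2\int_0^1\rho^{-3/2}\,dp-\tfrac12 d(s)$, and after collecting everything the surviving factor is precisely $d(s)+\tfrac12 s^2-\Omega(1)-r=R(s)-r$, multiplied by $s\int_0^1\rho^{-3/2}\,dp$. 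This is the identity stated above. I expect this bookkeeping — choosing the integration by parts so that the $H\rho^{-3/2}$ contributions annihilate, and recognizing $R(s)$ at the end — to be the main (and essentially the only) obstacle; everything else is routine differentiation.

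Finally, the assertion that $\sigma$ \emph{increases to infinity} on $(s_+(r),+\infty)$ requires a separate growth estimate in addition to the sign of the derivative: since $\tfrac\rho2\,\rho^{-1/2}=\tfrac12\rho^{1/2}\ge\tfrac12\sqrt{s^2-2\max_{p\in[0,1]}\Omega(p)}$ while the remaining contributions to $\sigma$ stay bounded (indeed $H$ and $\Omega$ are bounded and $\rho^{-1/2}\to 0$ as $s\to+\infty$), we obtain $\sigma(s;r)\to+\infty$. Together with the monotonicity just established, this completes the proof.
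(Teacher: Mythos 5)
Your proof is correct and follows essentially the same route as the paper: both differentiate $\sigma$ under the integral sign and arrive at the identical factorization $\partial_s\sigma(s;r)=s\,(R(s)-r)\int_0^1 H_p^3(p;s)\,dp$, then read off the sign pattern from the behavior of $R(s)$; your integration by parts on the $G\,\rho^{-1/2}$ term is just a minor rearrangement of the paper's bookkeeping (which instead recognizes $H\,\partial_s H_p+H_p\,\partial_s H$ as $\partial_s(HH_p)$ and integrates it to $d(s)d'(s)$). Your explicit growth estimate showing $\sigma\to+\infty$ is a small welcome addition, since the paper leaves that endpoint claim implicit in the derivative formula.
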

\begin{proof}
Because
\[
H_p(p;s) = \frac{1}{\sqrt{s^2 - 2 \Omega(p)}}, \ \ \partial_s H_p(p;s) = -s H_p^3(p;s),
\]
we can compute the derivative
\[
\begin{split}
\sigma_s(s;r) & = \int_0^1 \left( \frac{1}{2H_p^2(p;s)} - H(p;s) - \Omega(p) + \Omega(1) + r \right) \partial_s H_p(p;s) \,dp  \\
& +  \int_0^1 \left( -\frac{\partial_s H_p(p;s)}{H_p^3(p;s)} - \partial_s H(p;s) \right) H_p(p;s) \,dp \\
& = \int_0^1 \left( -\frac{1}{2H_p^2(p;s)} - \Omega(p) + \Omega(1) + r \right) \partial_s H_p(p;s) \,dp - d(s)d'(s) \\
& = \int_0^1 \left( -\tfrac12 s^2 + \Omega(1) + r \right) \partial_s H_p(p;s) \,dp - d(s) \int_0^1 \partial_s H_p(p;s) \\
& = -s (r-R(s)) \int_0^1 H_p^3(p;s) \, dp.
\end{split}
\]
Finally, because $R(s) < r$ for $s_-(r) < s < s_+(r)$ and $R(s) > r$ for $s> s_+(r)$ or $s < s_-(r)$ we obtain the statement of the lemma.
\end{proof}

Our function $\sigma(s;r)$ and it's role is similar to the function $\sigma(h)$ introduced by Keady and Norbury in \cite{Keady1975}. The main purpose of the latter is to be used for a comparison with the flow force constant $\FF$.  

The following function will be also involved in our analysis.
\begin{equation} \label{kappa}
	\kappa(s;r) = 2 (\FF - \sigma(s;r)) - (r- R(s))^2.
\end{equation}
A direct computation gives
\[
\begin{split}
\partial_s \kappa(s;r) & = - 2 \partial_s \sigma(s;r) + 2 (r - R(s)) R'(s) \\
& = 2 s (r-R(s)) \int_0^1 H_p^3(p;s) \, dp + 2 (r - R(s)) (s + d'(s)) \\
& = 2 s (r - R(s)).
\end{split}
\]
Thus, we obtain
\begin{lemma} \label{lemma:kappa} For a given $r \in (R_c,R_0)$ the function $s \mapsto \kappa(s;r)$ decreases for $s \in (s_0,s_-(r))$, increases for $s \in (s_-(r),s_+(r))$ and decreases to minus infinity for $s \in (s_+(r),+\infty)$.
\end{lemma}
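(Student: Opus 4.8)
The plan is to read the monotonicity straight off the derivative formula $\partial_s \kappa(s;r) = 2s(r - R(s))$ that was just established in the computation preceding the statement. Since $r \in (R_c,R_0)$ is fixed and every admissible $s$ satisfies $s > s_0 \geq 0$, the factor $2s$ is strictly positive, so the sign of $\partial_s \kappa$ agrees with the sign of $r - R(s)$. Thus the whole monotonicity part of the proof reduces to deciding where the curve $R(s)$ sits relative to the level $r$, which is exactly the information recorded in Section 2.1.

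First I would recall the qualitative behaviour of $R(s)$: as $s$ runs over $(s_0,s_c)$ the function $R$ strictly decreases from $R_0$ to $R_c$, and for $s > s_c$ it strictly increases to $+\infty$; moreover $R(s_-(r)) = R(s_+(r)) = r$ with $s_-(r) < s_c < s_+(r)$. Combining this with the sign observation yields all three regimes simultaneously: on $(s_0,s_-(r))$ we have $R(s) > r$, so $\partial_s \kappa < 0$ and $\kappa$ decreases; on $(s_-(r),s_+(r))$ we have $R(s) < r$, so $\partial_s \kappa > 0$ and $\kappa$ increases; and on $(s_+(r),+\infty)$ we have $R(s) > r$ again, so $\partial_s \kappa < 0$ and $\kappa$ decreases. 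This settles every monotonicity assertion, mirroring the structure already used for $\sigma$ in Lemma \ref{lemma:sigma}.

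It remains to verify the limit $\kappa(s;r) \to -\infty$ as $s \to +\infty$, and here I would use the explicit expressions. Since $d(s) \to 0$ and $R(s) = \tfrac12 s^2 - \Omega(1) + d(s)$, we have $R(s) \sim \tfrac12 s^2$, so $(r - R(s))^2$ grows like $\tfrac14 s^4$. On the other hand, the dominant contribution to $\sigma(s;r)$ comes from the term $\int_0^1 \tfrac{1}{2H_p^2}\,H_p\,dp = \int_0^1 \tfrac12 \sqrt{s^2 - 2\Omega(p)}\,dp$, which grows only linearly in $s$, while the remaining terms stay bounded because $H,H_p \to 0$ uniformly; hence $\sigma(s;r) = O(s)$. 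Consequently $\kappa = 2(\FF - \sigma) - (r-R(s))^2$ is dominated by the quartic term and tends to $-\infty$. Equivalently, one can simply integrate $\partial_s \kappa = 2s(r - R(s)) \sim -s^3$ to get $\kappa \sim -\tfrac14 s^4$.

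Because the derivative identity is already in hand, I do not expect any serious obstacle: the argument is pure sign analysis plus one elementary asymptotic estimate. The only point demanding a little care is the limit at infinity, where one must confirm that the linear growth of $\sigma$ cannot compete with the quartic growth of $(r - R(s))^2$; this follows at once from the explicit integrands above.
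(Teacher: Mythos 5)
Your proof is correct and follows essentially the same route as the paper: the paper likewise reads all three monotonicity claims directly off the identity $\partial_s \kappa(s;r) = 2s\,(r-R(s))$ combined with the sign of $r-R(s)$, which is negative on $(s_0,s_-(r))$, positive on $(s_-(r),s_+(r))$, and negative on $(s_+(r),+\infty)$. Your explicit verification of the divergence $\kappa(s;r)\to-\infty$ (via $R(s)\sim\tfrac12 s^2$ and $\sigma(s;r)=O(s)$, or equivalently by integrating $\partial_s\kappa\sim -s^3$) spells out a detail the paper leaves implicit, but it is the same argument in substance.
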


These monotonicity properties of functions $\sigma$ and $\kappa$ will used in what follows.

\begin{figure}[!t]
	\centering%
	\includegraphics[scale=0.9]{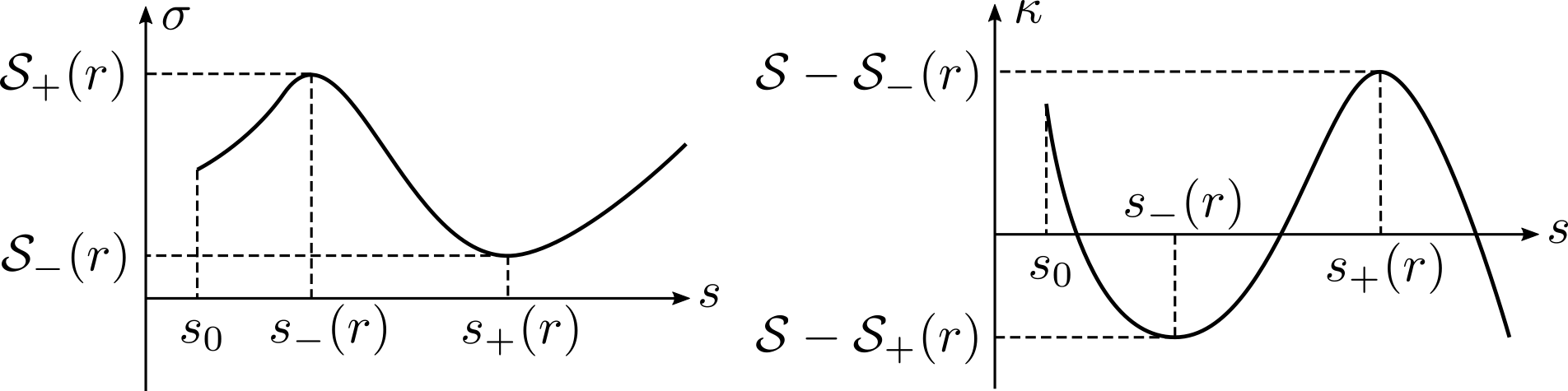}
	\caption{Graphs of functions $\sigma(s;r)$ and $\kappa(s;r)$.}
	\label{fig:stokessolitary}
\end{figure}

\subsection{Flow force flux functions}

Our aim is to extract some information by comparing the flow force constant $\FF$ to $\sigma(s;r)$ for different values of $s > s_0$. For this purpose we introduce the (relative) flow force flux function $\Phi^{(s)}$ by setting
\begin{equation} \label{fff}
\phis(q,p) = \int_0^p \left( \frac{(\ws_p(q,p'))^2}{h_p(q,p') (H_p(p';s))^2} - \frac{(\ws_q(q,p'))^2}{h_p(q,p')} \right) \, dp'.
\end{equation}
The latter functions were recently introduced in \cite{Lokharu2020} and \cite{KozLokhWheeler2020}. The same computation as in Section 3 of \cite{KozLokhWheeler2020} gives
\begin{equation} \label{fff:der}
\phis_q = - \ws_q \left( \frac{1 + (\ws_q)^2}{h_p^2} - \frac{1}{H_p^2}\right), \ \ \phis_p = \frac{(\ws_p)^2}{h_p H_p^2} - \frac{(\ws_q)^2}{h_p}.
\end{equation}
This shows that $\phis \in C^{2,\gamma}(\overline{S})$, provided $h \in C^{2,\gamma}(\overline{S})$ and $\omega \in C^{\gamma}([0,1])$. A key property of $\phis$ is that it solves a homogeneous elliptic equation as stated in the next proposition, while satisfies certain boundary conditions, involving the flow force constant.
\begin{proposition}
	There exist functions $b_1, b_2 \in L^{\infty}(S)$ such that
	\begin{equation} \label{fff:main}
	\frac{1+h_q^2}{h_p^2} \phis_{pp} - 2\frac{h_q}{h_p} \phis_{qp} + \phis_{qq} + b_1 \phis_q + b_2 \phis_p = 0 \ \ \text{in} \ \ S.
	\end{equation}
	Furthermore, $\phis$ satisfies the boundary conditions
\begin{subequations}\label{fff:boundary}
	\begin{alignat}{2}
	\phis &= 2(\FF - \sigma(s;r)) - 2 (r - R(s)) \ws(q,1) + (\ws(q,1))^2 &\qquad& \text{for } p=1,\label{fff:top} \\ 
	\phis &= 0&\qquad& \text{for } p=0. \label{fff:bot}
	\end{alignat}
\end{subequations}
	In the irrotational case $b_1,b_2 = 0$ and \eqref{fff:main} is equivalent to the Laplace equation.
\end{proposition}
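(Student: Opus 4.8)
The plan is to treat the two assertions—the homogeneous equation \eqref{fff:main} and the boundary relations \eqref{fff:boundary}—separately, taking the first-order identities \eqref{fff:der} as the point of departure. Throughout I write $L$ for the operator with principal part
\[
L = \frac{1+h_q^2}{h_p^2}\,\partial_{pp} - 2\frac{h_q}{h_p}\,\partial_{qp} + \partial_{qq},
\]
which is exactly the principal part of the equation \eqref{ws:main} satisfied by $\ws$, and I use repeatedly that $h_q = \ws_q$, so that $1 + h_q^2 = 1 + (\ws_q)^2$.

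The boundary conditions are the easy half. The lower one is immediate: evaluating the definition \eqref{fff} at $p=0$ leaves an empty integral, so $\phis(q,0)=0$, which is \eqref{fff:bot}. For the upper relation \eqref{fff:top} I would integrate $\phis_p$ from $0$ to $1$, so that $\phis(q,1) = \int_0^1 \phis_p\,dp$, and compare with $2(\FF-\sigma(s;r))$. Substituting $\ws_p = h_p - H_p$ and $\ws_q = h_q$ into \eqref{fff:der} and using the laminar Bernoulli relation $\tfrac12 H_p^{-2} + \Omega = \tfrac12 s^2$ to replace $H_p^{-2}$ by $s^2 - 2\Omega$, a short manipulation shows that, with $F$ and $G$ denoting the integrands of \eqref{height:ff} and \eqref{sigma},
\[
\phis_p - 2(F-G) = s^2\,\ws_p + (h^2 - H^2)_p - 2\bigl(\Omega(1)+r\bigr)\ws_p,
\]
every $\Omega$-dependent term having cancelled. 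Integrating in $p$, using $\int_0^1 F\,dp = \FF$, $\int_0^1 G\,dp = \sigma(s;r)$ together with $\ws(q,0)=0$, $H(1)=d(s)$ and $h(q,1) = \ws(q,1)+d(s)$, and finally substituting $R(s) = \tfrac12 s^2 - \Omega(1) + d(s)$, the right-hand side collapses to $(\ws(q,1))^2 - 2(r-R(s))\,\ws(q,1)$, which is precisely \eqref{fff:top}. I expect no difficulty here.

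The elliptic equation \eqref{fff:main} is the substantive part. I would differentiate the explicit formulas \eqref{fff:der} once more to obtain $\phis_{pp}$, $\phis_{qp}$, $\phis_{qq}$ and assemble $L[\phis]$. The second derivatives of $\ws$ are the only genuinely top-order quantities that appear, and the crucial structural observation is that they enter $L[\phis]$ only through the same combination $\frac{1+h_q^2}{h_p^2}\ws_{pp} - 2\frac{h_q}{h_p}\ws_{qp} + \ws_{qq}$ that occurs in \eqref{ws:main}. Substituting \eqref{ws:main} for this combination eliminates all second-order terms and reduces $L[\phis]$ to an expression $E$ built solely from $\ws$, $h$, $H$ and their first derivatives, divided by powers of $h_p$ and $H_p$. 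It then remains to rewrite $E$ as $-b_1\phis_q - b_2\phis_p$: one checks that after the substitution each surviving term of $E$ carries either the factor $\ws_q$, which matches $\phis_q = -\ws_q\bigl(\tfrac{1+(\ws_q)^2}{h_p^2}-\tfrac1{H_p^2}\bigr)$, or the factor $\tfrac{(\ws_p)^2}{h_pH_p^2}-\tfrac{(\ws_q)^2}{h_p}$, which equals $\phis_p$, so that $E$ is a bounded-coefficient combination of $\phis_q$ and $\phis_p$ with no leftover undifferentiated piece.

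The remaining point is boundedness of the coefficients. Here I would use that $h\in C^{2,\gamma}(\overline S)$ bounds all first and second derivatives of $h$, hence of $\ws$, uniformly; that $h_p$ is bounded below by a positive constant—equivalently $\psi_y$ is bounded above, which holds since $\psi\in C^{2,\gamma}(\overline D)$—and above by \eqref{unih} together with $\inf_D\psi_y>0$; and that $H_p(\cdot\,;s)$ lies between positive constants on the compact interval $[0,1]$ for the fixed value of $s$. These bounds make every factor entering $E$, $\phis_q$ and $\phis_p$ bounded, so $b_1,b_2\in L^\infty(S)$, and they also give uniform ellipticity of $L$, whose symbol has determinant $h_p^{-2}>0$. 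I expect the main obstacle to be exactly the bookkeeping of the previous paragraph: confirming that, once \eqref{ws:main} has been used, the first-order remainder regroups cleanly into a bounded multiple of $\phis_q$ and $\phis_p$ and produces \emph{no} zeroth-order term, since it is this absence of an undifferentiated coefficient that will later make $\phis$ subject to the maximum principle. As a consistency check, in the irrotational case $\omega\equiv 0$ one has $\Omega\equiv 0$, $H_p\equiv 1/s$ and $H_{pp}\equiv 0$; the lower-order terms of \eqref{ws:main} vanish, the remainder $E$ vanishes identically, and \eqref{fff:main} reduces to $L[\phis]=0$, equivalent to the Laplace equation, as claimed.
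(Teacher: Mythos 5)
Your treatment of the boundary conditions is correct and is, up to reorganization, the paper's own computation: the paper expands $\FF-\sigma(s;r)$ and integrates by parts, while you verify a pointwise identity for $\phis_p-2(F-G)$ and then integrate; both rest on the same laminar identities $H_p^{-2}=s^2-2\Omega$ and $R(s)=\tfrac12 s^2-\Omega(1)+d(s)$, and \eqref{fff:bot} is indeed trivial. For the elliptic equation the paper itself gives no computation at all (it cites \cite{KozLokhWheeler2020}), so there your proposal attempts more than the paper records; and your top-order bookkeeping is in fact right: computing $\phis_{pp},\phis_{qp},\phis_{qq}$ from \eqref{fff:der}, the second derivatives of $\ws$ enter the principal combination only through the product of $\frac{1}{H_p^2}-\frac{1+3(\ws_q)^2}{h_p^2}$ with the left-hand side of \eqref{ws:main}, so the substitution you propose does eliminate all second-order terms.

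The genuine gap is the regrouping step you yourself flagged as the main obstacle, and the criterion you give for it would fail. A term carrying a bare factor $\ws_q$ is \emph{not} a bounded multiple of $\phis_q$: since $\phis_q=-\ws_q\bigl(\frac{1+(\ws_q)^2}{h_p^2}-\frac{1}{H_p^2}\bigr)$, the second factor vanishes on the set where $h_p=H_p\sqrt{1+(\ws_q)^2}$, on which $\ws_q$ need not vanish, so the coefficient $b_1$ one would need blows up there. Moreover, the remainder really does contain such a term: carrying out your plan (using $H_{pp}=\omega H_p^3$), after subtracting the visible multiple $\omega\,\tfrac{h_p+2H_p}{h_p}\,\phis_p$ of $\phis_p$, the surviving leftover is $4\omega(\ws_q)^2H_p/h_p^2$, which carries neither the factor $\phis_p$ nor the full factor $\ws_q\bigl(\frac{1+(\ws_q)^2}{h_p^2}-\frac{1}{H_p^2}\bigr)$, only $(\ws_q)^2$. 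The proposition remains true, but absorbing this term requires the quantitative bound $(\ws_q)^2\le C\bigl(|\phis_q|+|\phis_p|\bigr)$, which needs a separate argument. For instance, set $D_1=(1+(\ws_q)^2)H_p^2-h_p^2$ and $D_2=(\ws_p)^2-(\ws_q)^2H_p^2$, so that $\ws_q D_1=-h_p^2H_p^2\phis_q$, $D_2=h_pH_p^2\phis_p$ and $D_1+D_2=-2H_p\ws_p$; if $|\ws_p|\le\tfrac12 H_p|\ws_q|$ then $|D_2|\ge\tfrac34(\ws_q)^2H_p^2$ and $\phis_p$ alone controls $(\ws_q)^2$, while if $|\ws_p|>\tfrac12 H_p|\ws_q|$ then $|D_1|+|D_2|\ge 2H_p|\ws_p|>H_p^2|\ws_q|$ and one concludes using the uniform bound on $|\ws_q|$. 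Only with such an inequality (one may then take, e.g., $b_1$ and $b_2$ proportional to $\phis_q$ and $\phis_p$ over $\phis_q^2+\phis_p^2$ times the remainder) do you obtain $b_1,b_2\in L^\infty(S)$; without it, the coefficients produced by term-by-term factoring are unbounded, and boundedness of the lower-order coefficients, together with the absence of any zeroth-order term, is exactly what the maximum-principle arguments later in the paper require.
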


\begin{proof}
	For the proof we refer to \cite{KozLokhWheeler2020}. Let us outline the derivation of \eqref{fff:top}. For this purpose we compute the difference
	\[
	\begin{split}
	\FF - \sigma(s;r) & = \int_0^1 \left( \frac{1-(\ws_q)^2}{2h_p^2} - \ws - \frac{1}{2 H_p^2} \right) H_p \, dp \\
	& + \int_0^1 \left( \frac{1-(\ws_q)^2}{2h_p^2} - h - \Omega + \Omega(1) + r \right) \ws_p \, dp \\
	& = \int_0^1 \left( \frac{(\ws_p)^2}{2h_p H_p^2} - \frac{(\ws_q)^2}{2h_p} + \ws H_p \right) \, dp \\
	& + \int_0^1 \left( -\frac{1}{2H_p^2} - \ws - H - \Omega + \Omega(1) + r  \right) \ws_p \, dp.
	\end{split}
	\]
	Now using the identity
	\[
	- \Omega(p) + \Omega(1) + R(s) = \frac{1}{2H_p^2} + H(1)
	\]
	and integrating first-order terms, we conclude that
	\[
	2(\FF - \sigma(s;r)) = 2 (r - R(s)) \ws(q,1) - (\ws(q,1))^2 + \int_0^1 \left( \frac{(\ws_p)^2}{h_p H_p^2} - \frac{(\ws_q)^2}{h_p} \right) \, dp.
	\]
\end{proof}

The next proposition borrowed from \cite{Lokharu2020} explains the meaning of the auxiliary function $\kappa(s; r)$.

\begin{proposition} \label{p:fff}
	Let $h \in  C^{2,\gamma}(\overline{S})$ be a solution to \eqref{height} with $r>R_c$. Assume that the flow force flux function $\phis$ for some $s > s_0$ satisfies $\inf_{q \in \R} \phis(q; 1) \leq 0$. Then
	\[
	\inf_{q \in \R} \phis(q; 1) = \kappa(s; r),
	\]
	where $\kappa(s; r)$ is defined by \eqref{kappa}. Furthermore, the infimum is attained over a sequence $\{q_j\}_{j=1}^\infty$ for which $\lim_{j \to +\infty} \ws(q_j,1) = r- R(s)$.
\end{proposition}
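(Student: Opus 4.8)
The plan is to start from the top boundary condition \eqref{fff:top} and simply complete the square. Writing $t=\ws(q,1)$, the right-hand side of \eqref{fff:top} is $t^2-2(r-R(s))t+2(\FF-\sigma(s;r))$, and recognizing the definition \eqref{kappa} of $\kappa$ this becomes
\[
\phis(q,1)=\big(\ws(q,1)-(r-R(s))\big)^2+\kappa(s;r).
\]
From this single identity the inequality $\inf_{q}\phis(q,1)\geq\kappa(s;r)$ is immediate, and the whole proposition reduces to the matching bound $\inf_q\phis(q,1)\leq\kappa(s;r)$; the final assertion about the minimizing sequence is then automatic, since $\phis(q_j,1)\to\kappa$ forces $(\ws(q_j,1)-(r-R(s)))^2\to 0$.

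For the reverse bound I would set $M=\inf_q\phis(q,1)$, which by hypothesis satisfies $M\leq 0$. Since $\phis=0$ on $p=0$ and $M\leq 0$, all boundary values of $\phis$ are $\geq M$; as $\phis$ is bounded and solves the homogeneous elliptic equation \eqref{fff:main} in the strip, the maximum principle gives $\phis\geq M$ throughout $\overline{S}$. I would then pick a minimizing sequence $q_j$ with $\phis(q_j,1)\to M$ and pass to a subsolution $\tilde{h}$ of $h$ in the sense of Definition \ref{defsub}. By $C^{2,\gamma'}$-convergence on compacts the associated flux function $\tilde\Phi$ is the flow force flux of $\tilde{h}$, hence obeys the same equation \eqref{fff:main} and boundary conditions \eqref{fff:top}--\eqref{fff:bot}; it also satisfies $\tilde\Phi\geq M$ in $\overline{S}$, $\tilde\Phi(\cdot,0)=0$ and $\tilde\Phi(0,1)=M$. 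Thus $\tilde\Phi$ attains its global minimum $M$ at the boundary point $(0,1)$.

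Now suppose, for contradiction, that $M>\kappa(s;r)$. The square identity applied to $\tilde{h}$ at $(0,1)$ gives $(\tilde{w}(0,1)-(r-R(s)))^2=M-\kappa>0$, so $\tilde{w}(0,1)\neq r-R(s)$; and since $q=0$ minimizes $q\mapsto\tilde\Phi(q,1)$, differentiating the identity in $q$ yields $\tilde{w}_q(0,1)=0$. I would then apply the strong maximum principle to $\tilde\Phi$: either $\tilde\Phi>M$ in the open strip, or $\tilde\Phi\equiv M$. In the first case the Hopf lemma at the flat boundary point $(0,1)$ forces $\tilde\Phi_p(0,1)<0$, whereas \eqref{fff:der} together with $\tilde{w}_q(0,1)=0$ gives $\tilde\Phi_p(0,1)=(\tilde{w}_p(0,1))^2/(\tilde{h}_p(0,1)(H_p(1;s))^2)\geq 0$, a contradiction. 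In the second case $\tilde\Phi\equiv M$ combined with $\tilde\Phi=0$ on $p=0$ forces $M=0$, hence $\kappa<0$; the square identity then makes $\tilde{w}(\cdot,1)$ a constant different from $r-R(s)$, so $\tilde{w}_q(\cdot,1)\equiv 0$ on $p=1$, and $\tilde\Phi_p\equiv 0$ forces $\tilde{w}_p(\cdot,1)\equiv 0$; substituting into the top boundary condition \eqref{ws:top} gives $\tilde{w}(\cdot,1)=r-R(s)$, again a contradiction. Either way we reach a contradiction, so $M=\kappa(s;r)$.

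I expect the main obstacle to be precisely the non-compactness of the strip in the $q$-direction: the infimum over $q$ need not be attained on $h$ itself, so the Hopf lemma cannot be applied to $h$ directly. The translation-and-subsolution device is exactly what converts the infimum into a genuine minimum realized at the point $(0,1)$ of the limiting configuration $\tilde{h}$, where the strong maximum principle and the Hopf lemma become available. The secondary technical point is to confirm that $\tilde\Phi$ really is the flux function of $\tilde{h}$ and therefore inherits \eqref{fff:main}--\eqref{fff:bot}; this is where the uniform $C^{2,\gamma}$ bounds and the compact embedding underlying Definition \ref{defsub} are used, since the coefficients $b_1,b_2$ and the boundary data depend continuously on $h$ in $C^2$.
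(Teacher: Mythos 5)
Your proof is correct. One remark on comparison before the mathematics: the paper does not actually prove Proposition \ref{p:fff} --- it defers entirely to Proposition 2.4 of \cite{Lokharu2020} --- so there is no internal proof to match line by line; however, your argument is assembled exactly from this paper's own toolkit, and the key local mechanism you use (a minimum of the flux function on $p=1$, the Hopf lemma there, and $q$-differentiation of \eqref{fff:top} to force $\ws=r-R(s)$ at the minimizer) is precisely the mechanism the paper deploys near the end of its proof of part (ii) of Theorem \ref{thm:BLC}. Checking your steps: the completed square $\phis(q,1)=(\ws(q,1)-(r-R(s)))^2+\kappa(s;r)$ is correct and gives both the one-sided bound $\inf_q\phis(q,1)\geq\kappa(s;r)$ and the automatic identification of the minimizing sequence; the hypothesis $\inf_q\phis(q,1)\leq 0$ enters in exactly the right place (it prevents the bottom boundary, where $\phis=0$, from undercutting $M$, so that after translating, $(0,1)$ is a \emph{global} minimum of $\tilde\Phi$ and Hopf is applicable); and the dichotomy is handled soundly --- in the nondegenerate case $\tilde{w}(0,1)\neq r-R(s)$ forces $\tilde{w}_q(0,1)=0$, whence \eqref{fff:der} gives $\tilde\Phi_p(0,1)\geq 0$ against Hopf's $\tilde\Phi_p(0,1)<0$, while in the degenerate case $\tilde\Phi\equiv M=0$ your combination of \eqref{fff:der} with \eqref{ws:top} eliminates it. Two minor points. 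First, the Phragm\'en--Lindel\"of step ($\phis\geq M$ on the unbounded strip) and the Hopf lemma require uniform ellipticity of \eqref{fff:main}, i.e.\ $0<\inf_S h_p\leq\sup_S h_p<\infty$ together with $b_1,b_2\in L^\infty$, and a maximum principle valid in unbounded domains; this is what the paper's citation of \cite{Vitolo2007} covers, so invoke it explicitly rather than asserting ``the maximum principle'' bare (the same uniformity is also what makes $\tilde\Phi$ inherit \eqref{fff:main}--\eqref{fff:boundary} with the \emph{same} constants $\FF$, $\sigma(s;r)$, $\kappa(s;r)$, since subsolutions preserve the flow force). Second, your Hopf sign is the right one: at a minimum on the upper boundary the outward ($+p$) derivative is strictly negative; note that the paper's text in the proof of part (ii) writes ``$\Phi_p>0$ by the Hopf lemma'', which is evidently a sign typo, since only $\Phi_p<0$ yields $w_q\neq 0$ through \eqref{fff:der}.
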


For a proof of this statement we refer to Proposition 2.4 in \cite{Lokharu2020}.

\section{Proof of Theorem \ref{thm:BLC}}

First we prove the left inequality in the first statement of Theorem \ref{thm:BLC}. For the rest of the paper we assume that $h$ is the height function corresponding to $\psi$ and $\eta$; see Section 3.1 for details and notations.

\subsection{Proof of the lower bound $\FF \geq \FF_-(r)$ (part (i) of Theorem \ref{thm:BLC})}

Note that the first claim of Proposition \ref{p:bounds} allows us to assume $\etamin> \dmin$, because otherwise $\FF = \FFmin$ and we
have nothing to prove. In fact we are going to prove a stronger statement, which will be used in the proof of part (ii).

\begin{proposition} \label{p:lowerbound} 
Under assumptions of Theorem 2.1 and assuming that $\etamin> \dmin$, we have $\FF > \FFmin$.
\end{proposition}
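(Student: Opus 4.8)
The plan is to argue by contradiction: assume $\FF \le \FFmin$, which by \eqref{kappa} and $\sigma(s_+(r);r)=\FFmin$ is the same as $\kappa(s_+(r);r)=2(\FF-\FFmin)\le 0$, and to derive a contradiction from the hypothesis $\etamin>\dmin$. The starting point is to rewrite the top boundary value \eqref{fff:top} by completing the square. Using $\sigma(s_\mp(r);r)=\FF_\pm(r)$ and $\ws(q,1)=\eta(q)-d(s)$, one obtains for every admissible $s$ the identity
\begin{equation*}
\phis(q,1) = \big(\ws(q,1)-(r-R(s))\big)^2 + \kappa(s;r) = \big(\eta(q)-\tau(s)\big)^2 + \kappa(s;r),
\end{equation*}
where $\tau(s):=d(s)+r-R(s)=r+\Omega(1)-\tfrac12 s^2$. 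In particular $\tau(s_+(r))=\dmin$ and $\tau(s_-(r))=\dplus$.

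Next I would locate the right stream. Since $\tau$ is continuous and strictly decreasing with $\tau(s_+(r))=\dmin$ and $\tau(s_-(r))=\dplus$, and since $\dmin<\etamin\le\dplus$ by Proposition \ref{p:bounds}, there is a unique $s_\star\in[s_-(r),s_+(r))$ with $\tau(s_\star)=\etamin$; note $s_\star<s_+(r)$ strictly because $\etamin>\dmin$. By Lemma \ref{lemma:kappa} the function $\kappa(\cdot;r)$ is strictly increasing on $(s_-(r),s_+(r))$, so $\kappa(s_\star;r)<\kappa(s_+(r);r)\le 0$. Substituting $\tau(s_\star)=\etamin$ into the identity above gives $\phistar(q,1)=(\eta(q)-\etamin)^2+\kappa(s_\star;r)$, whence $\inf_{q}\phistar(q,1)=\kappa(s_\star;r)<0$, the infimum being approached along a sequence $\{q_j\}$ with $\eta(q_j)\to\etamin$ (consistent with Proposition \ref{p:fff}).

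Then I would transplant this minimizing sequence to a finite point via a subsolution. Let $\tilde h$ be the subsolution of $h$ obtained along $\{q_j\}$ in the sense of Definition \ref{defsub}, keeping the names $\phistar,\wstar$ for the flux and difference functions built from $\tilde h$ and $s_\star$, and set $\tilde\eta=\tilde h(\cdot,1)$. Then $\tilde\eta\ge\etamin$ on $\R$ with $\tilde\eta(0)=\etamin$, so $\tilde\eta$ attains a global minimum at $q=0$ and $\wstar_q(0,1)=\tilde h_q(0,1)=0$; moreover $\phistar(0,1)=\kappa(s_\star;r)$. Since $\phistar=0$ on $p=0$ and $\phistar(q,1)=(\tilde\eta(q)-\etamin)^2+\kappa(s_\star;r)\ge\kappa(s_\star;r)$ on $p=1$, the maximum principle for the homogeneous equation \eqref{fff:main} in the strip (applied to the bounded solution $\phistar$, via Vitolo) yields $\phistar\ge\kappa(s_\star;r)$ throughout $S$. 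Hence $\phistar$ attains its global minimum $\kappa(s_\star;r)$ at the boundary point $(0,1)$, and $\phistar$ is nonconstant because $\kappa(s_\star;r)<0$ while $\phistar$ vanishes on $p=0$. The Hopf lemma at $(0,1)$ then forces the outward normal derivative there to be negative, i.e. $\phistar_p(0,1)<0$. On the other hand, evaluating \eqref{fff:der} at $(0,1)$ and using $\wstar_q(0,1)=0$ gives
\begin{equation*}
\phistar_p(0,1)=\frac{(\wstar_p(0,1))^2}{\tilde h_p(0,1)\,H_p(1;s_\star)^2}\ge 0,
\end{equation*}
a contradiction. Therefore $\FF>\FFmin$.

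I expect the main obstacle to be the passage to the limit at infinity: the infimum of $\phistar(\cdot,1)$ need not be attained for $h$ itself, and the clean Hopf argument only becomes available after the minimizing sequence is transplanted to a finite point by the subsolution device, which relies on the uniform $C^{2,\gamma}$ bounds and the maximum principle in the unbounded strip. A secondary but essential bookkeeping point is the \emph{strict} inequality $\kappa(s_\star;r)<0$ (rather than merely $\le 0$): it is precisely what guarantees nonconstancy of $\phistar$ and hence the applicability of Hopf's lemma, and it is secured by the strict monotonicity of $\kappa$ together with $s_\star<s_+(r)$, the latter being the quantitative content of the assumption $\etamin>\dmin$.
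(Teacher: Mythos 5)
Your proof is correct, but it follows a genuinely different route from the paper's. The paper's own proof stays entirely at the level of boundary data: assuming $\FF \leq \FFmin$, it first invokes Proposition \ref{p:fff} (imported from an earlier paper) to show $\inf_\R \Phi^{(s_+(r))}(q,1) > 0$ under $\etamin > \dmin$, then picks $s_\star$ with $d(s_\star) = \etamin$ (treating $\etamin = \dplus$ as a separate case via Proposition \ref{p:bounds}(iii)) to get $\inf_\R \phistar(q,1) < 0$ from the monotonicity of $\sigma$, and finally runs an intermediate-value argument in the parameter $s$ to find $s_\dagger$ with $\inf_\R \Phi^{(s_\dagger)}(q,1) = 0$; Proposition \ref{p:fff} then yields $\kappa(s_\dagger;r) = 0$, hence $\FF \geq \sigma(s_\dagger;r) > \FFmin$, a contradiction. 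You instead bypass Proposition \ref{p:fff} completely by completing the square in \eqref{fff:top},
\begin{equation*}
\phis(q,1) = \bigl(\ws(q,1)-(r-R(s))\bigr)^2 + \kappa(s;r),
\end{equation*}
choosing the single stream parameter $s_\star$ by $\tau(s_\star)=\etamin$ so that the square term has infimum exactly zero, getting $\inf_\R\phistar(q,1)=\kappa(s_\star;r)<0$ from Lemma \ref{lemma:kappa}, and then closing the argument with PDE tools: transplanting the minimizing sequence to a finite point via a subsolution (Definition \ref{defsub}), applying the maximum principle in the unbounded strip and the Hopf lemma at the transplanted minimum, and contradicting the sign of $\phistar_p$ from \eqref{fff:der} at a critical point of the profile. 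What the paper's approach buys is brevity, since the hard analytic content is delegated to the cited Proposition \ref{p:fff}; what yours buys is self-containedness within this paper's toolkit (the subsolution device, the Vitolo-type maximum principle and Hopf's lemma are all used elsewhere in the paper, e.g.\ in Proposition \ref{p:bounds} and in the proof of part (ii)), a uniform treatment of the boundary case $\etamin=\dplus$ (no appeal to the flow-force part of Proposition \ref{p:bounds}(iii)), and an explicit display of the mechanism --- the square completion plus the Hopf argument --- that underlies Proposition \ref{p:fff} itself. The two points you flag as delicate (boundedness of $\phistar$ so the maximum principle applies in the strip, and strictness of $\kappa(s_\star;r)<0$ to guarantee nonconstancy for Hopf) are indeed the right ones, and both are handled correctly in your write-up.
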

\begin{proof}
	Assume the contrary, that $\FF \leq \FFmin$. First we show that the function $\Phi^{(s_+(r))}$ for $s = s_+(r)$ is nonnegative in $S$ and that
	\begin{equation} \label{FFinf}
	\inf_{\R} \Phi^{(s_+(r))}(q,1) > 0.
	\end{equation}
	If the latter is not true, then the infimum above is nonpositive and Proposition \ref{p:fff} gives a sequence of points $\{q_j\}_{j=1}^\infty$ for which 
	\[
	\lim_{j \to +\infty} w^{(s_+(r))}(q_j,1) = r - R(s_+(r)) = 0,
	\]
	which contradicts to the assumption $\check{\eta} > d_-(r)$. Thus, we proved \eqref{FFinf}. Now we show that for some $s \in (s_-(r),s_+(r))$ the function $\phis$ attains negative values somewhere along the upper boundary $p=1$. Indeed, by Proposition \ref{p:bounds} we can find $s_\star \in (s_-(r),s_+(r))$ such that
	\[
	H(1;s_\star) = \inf_\R h(q,1),
	\]
	until $\check{\eta} = d_+(r)$ so that $\FF = \FFplus$ by Proposition \ref{p:bounds} (iii) and we have nothing to prove. The choice of $s_\star$ and the boundary relation \eqref{fff:top} show that $\inf_{\R} \phistar (q,1) \leq \FF - \sigma(s_\star;r)$. But according Lemma \ref{lemma:sigma} (see Figure \ref{fig:stokessolitary}) we have $\sigma(s_\star;r) > \FFmin$ and $\FFmin \geq \FF$ by the assumption. Thus, $\inf_{\R} \phistar(q,1) < 0$. Finally, by the continuity we find $s_\dagger \in (s_-(r),s_+(r))$ such that 
	\[
	\inf_{\R}\Phi^{(s_\dagger)}(q,1) = 0.
	\]
	Then $\kappa(s_\dagger;r) = 0$ by Proposition \ref{p:fff} and the definition of $\kappa$ gives $\FF - \sigma(s_\dagger;r) \geq 0$. But $\sigma(s_\star;r) > \FFmin$ by Lemma \ref{lemma:sigma} and we arrive to a contradiction with the assumption $\FFmin \geq \FF$. Therefore, we proved that $\FF > \FFmin$.
\end{proof}

\subsection{Proof of the upper bound $\FF \leq \FFplus$ and part (iii) of Theorem \ref{thm:BLC}.}

We will complete the proof of the first claim (i) and will prove part (iii) of Theorem \ref{thm:BLC} at the same time. More precisely, we will show below that any nontrivial solution (other than a parallel flow) satisfies the strict inequality $\FF < \FFplus$. Assume the contrary, that $\FF \geq \FFplus$. Then we can prove

\begin{lemma} \label{lemma:up} Let $\FF \geq \FFplus$. Then for any $s \in (s_0,s_-(r))$ the function $\phis$ is nonnegative in $S$. 
\end{lemma}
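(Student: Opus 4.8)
The plan is to reduce the claim to the maximum principle for $\phis$, whose governing equation \eqref{fff:main} is homogeneous, uniformly elliptic, and contains no zeroth-order term. Since $\phis$ vanishes on $p=0$ by \eqref{fff:bot}, it suffices to show that $\phis \geq 0$ along the top boundary $p=1$ and then to propagate this sign into the interior.

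First I would rewrite the top boundary condition \eqref{fff:top} by completing the square. Using the definition \eqref{kappa} of $\kappa$, the expression $2(\FF - \sigma(s;r)) - 2(r-R(s))\ws(q,1) + (\ws(q,1))^2$ reorganizes as
\[
\phis(q,1) = \bigl( \ws(q,1) - (r - R(s)) \bigr)^2 + \kappa(s;r).
\]
Thus $\phis(q,1) \geq \kappa(s;r)$ for every $q \in \R$, so the sign of the top boundary data is governed entirely by $\kappa(s;r)$.

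Next I would verify that $\kappa(s;r) \geq 0$ for all $s \in (s_0, s_-(r))$. At the endpoint $s = s_-(r)$ one has $R(s_-(r)) = r$ and $\sigma(s_-(r);r) = \FFplus$, whence $\kappa(s_-(r);r) = 2(\FF - \FFplus) \geq 0$ by the standing assumption $\FF \geq \FFplus$. Since Lemma \ref{lemma:kappa} asserts that $s \mapsto \kappa(s;r)$ is decreasing on $(s_0, s_-(r))$, it follows that $\kappa(s;r) \geq \kappa(s_-(r);r) \geq 0$ there. Combined with the square identity above, this gives $\phis(q,1) \geq 0$ for all $q$, while $\phis(q,0) = 0$; hence $\phis \geq 0$ on $\partial S$.

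The final step, and the main technical point, is to pass from this boundary estimate to the interior. Because $S = \R \times (0,1)$ is unbounded, the classical minimum principle does not apply verbatim; instead I would exploit that $\phis$ is bounded—a consequence of $h \in C^{2,\gamma}(\overline{S})$, which forces $\ws_p$, $\ws_q$, and $h_p$ to be bounded in the flux integral \eqref{fff}—together with the Phragm\'en--Lindel\"of-type minimum principle for bounded solutions of uniformly elliptic equations in a strip, as in \cite{Vitolo2007}. This yields $\inf_S \phis \geq \inf_{\partial S} \phis \geq 0$, completing the argument. The delicate part is precisely the behaviour at horizontal infinity, where the nonnegativity of the boundary data must be balanced against possible oscillation of $\phis$; it is the boundedness of the coefficients $b_1, b_2$ and of $\phis$ itself that renders the unbounded-domain maximum principle applicable.
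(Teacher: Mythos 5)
Your proof is correct, and it deviates from the paper's in one worthwhile respect. The paper argues by contradiction: assuming $\inf_\R \phis(q,1) \leq 0$ for some $s \in (s_0,s_-(r))$, it invokes Proposition \ref{p:fff} (imported from \cite{Lokharu2020}) to conclude $\kappa(s;r) \leq 0$, then uses the monotonicity in Lemma \ref{lemma:kappa} to deduce $\kappa(s_-(r);r) < 0$, i.e.\ $\FF < \FFplus$, contradicting the hypothesis; the maximum principle then finishes, exactly as in your last step. You instead bypass Proposition \ref{p:fff} entirely: completing the square in \eqref{fff:top} gives the pointwise identity
\[
\phis(q,1) = \bigl( \ws(q,1) - (r-R(s)) \bigr)^2 + \kappa(s;r),
\]
so that $\phis(q,1) \geq \kappa(s;r)$ unconditionally, and then $\kappa(s_-(r);r) = 2(\FF - \FFplus) \geq 0$ together with the decrease of $\kappa$ on $(s_0,s_-(r))$ yields nonnegativity of the top boundary data directly, with no contradiction argument. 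Your square identity is precisely the elementary half of Proposition \ref{p:fff} (the lower bound $\inf_\R \phis(q,1) \geq \kappa(s;r)$); the nontrivial half of that proposition --- attainment of the infimum along a sequence with $\ws(q_j,1) \to r - R(s)$ --- is never needed for this lemma. So your route is more self-contained and makes visible that only trivial algebra underlies this step, while the paper's route is shorter on the page because Proposition \ref{p:fff} is already at hand and is genuinely needed elsewhere (e.g.\ in Proposition \ref{p:lowerbound}). Incidentally, your value $\kappa(s_-(r);r) = 2(\FF - \FFplus)$ is the correct one; the paper's proof writes $\FF - \FFplus$, dropping a harmless factor of $2$. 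Your closing appeal to the Phragm\'en--Lindel\"of-type minimum principle for bounded solutions in the unbounded strip, justified by the boundedness of $\phis$, of $1/h_p$, and of $b_1,b_2$, coincides with the paper's use of \cite{Vitolo2007} and is the right way to make the final ``maximum principle'' step rigorous.
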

\begin{proof}
	We prove by a contradiction. For a given $s \in (s_0, s_-(r))$ we assume that
	\[
		\inf_\R \phis(q,1) \leq 0.
	\]
	Then $\kappa(s;r) \leq 0$ by Proposition \ref{p:fff}. Thus, Lemma \ref{lemma:kappa} (see Figure \ref{fig:stokessolitary}) gives $\kappa(s_-(r);r) = \FF - \FF_+(r) < 0$, leading to a contradiction. Therefore, function $\phis$ is positive along the upper boundary $p=1$, zero on the bottom and we finish the proof by using a maximum principle.
\end{proof}

Now we can easily obtain a contradiction. It follows from Lemma \ref{lemma:up} that function $\Phi^{(s_-(r))}$ is nonnegative in $S$, so that 
\[
\Phi^{(s_-(r))}_p(q,0) = \frac{\left(w_p^{(s_-)}(q,0) \right)^2}{h_p(q,0)} > 0
\] 
for all $q\in\R$, provided $\Phi^{(s_-(r))}$ is not zero identically. It is not zero, because otherwise $h = H(p;s_-(r))$ and $\FF = \FF_+(r)$. Thus,  by Proposition \ref{p:wp} we can find a stream solution $H(p; s)$ with $s_0 < s_\star < s_-(r)$ such that $w^{(s_\star)}_p(q_\star,0) = \phistar_p(q_\star,0) = 0$ for some $q_\star \in \R$. But that is in a contradiction with the conclusion of Lemma \ref{lemma:up}, which requires that $\phis_p(q,0) > 0$ for all $q \in \R$ and all $s \in (s_0,s_-(r))$ by the Hopf lemma. This finishes the proof of (iii) and the upper bound in (i).

\subsection{Proof of part (ii) of Theorem \ref{thm:BLC}.}

Let $h$ be an arbitrary solution satisfying assumptions of the theorem and such that $\FF = \FF_-(r)$. On a basis of Proposition \ref{p:lowerbound} we can additionally assume that  $\check{\eta}= d_-(r)$. Our aim is to prove that the function
\[
w(q,p) = h(q,p) - H(p;s_+(r))
\]
uniformly converges to zero as $q \to \pm \infty$. Having that it is left to apply Theorem 3.1 in \cite{Hur07}, which proves that $h$ determines a supercritical and symmetric solitary wave, monotonically decaying
to its asymptotic level $d_-(r)$ on each side of the crest. Without loss of generality, we only need to show that
\begin{equation}\label{wdecay}
w(q,1) \to 0 \ \ \text{as} \ \ q \to +\infty.
\end{equation}
Let us prove
\begin{lemma}\label{lemma:decay}
	Assuming \eqref{wdecay} is not true there exists a non-zero subsolution $\hat{w}$ of $w$ corresponding to a sequence $\{\hat{q}_j\}_{j=1}^\infty$ accumulating at the positive infinity and such that $\hat{w}(q, 1) \to 0$ as $q \to +\infty$. 
\end{lemma}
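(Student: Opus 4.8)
The plan is to run a compactness argument on horizontal translates of $w=w^{(s_+(r))}$, choosing the translation points so that the limiting subsolution keeps a definite bump (hence is nonzero) while its right tail is flattened (hence it decays as $q\to+\infty$). First I record the setup. Since $s=s_+(r)$ gives $r-R(s_+(r))=0$ and we are in the case $\FF=\FFmin=\sigma(s_+(r);r)$, the boundary relation \eqref{fff:top} collapses to $\Phi^{(s_+(r))}(q,1)=w(q,1)^2$; as in Proposition \ref{p:bounds} the maximum principle gives $w\ge 0$ in $S$, while $\etamin=\dmin$ means $\inf_{\R}w(\cdot,1)=0$. This infimum is not attained at a finite $q_0$: otherwise $w(q_0,1)=w_q(q_0,1)=0$ at an interior minimum of the top trace, and the Hopf lemma together with \eqref{wspm} (which forces $w_p(q_0,1)\ge 0$) would give $w\equiv 0$, a laminar flow for which \eqref{wdecay} holds trivially. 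Because the hypotheses are invariant under $q\mapsto-q$ and, via the solitary-wave asymptotics, one-sided decay already suffices to conclude, I may assume the infimum is approached at $+\infty$, i.e.\ $\liminf_{q\to+\infty}w(\cdot,1)=0$. Finally, the negation of \eqref{wdecay} supplies $\delta>0$ and $q_j\to+\infty$ with $w(q_j,1)\ge\delta$, so that $L:=\limsup_{q\to+\infty}w(\cdot,1)\in[\delta,\infty)$ is finite and positive (finiteness from $h\in C^{2,\gamma}(\overline{S})$).

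Next I would manufacture arbitrarily wide flat valleys near $+\infty$. Taking $b_n\to+\infty$ with $w(b_n,1)\to 0$ and extracting, via Definition \ref{defsub} and the uniform $C^{2,\gamma}$ bounds on the translates $w(\cdot+b_n,\cdot)$, a subsolution $\tilde w\ge 0$ with $\tilde w(0,1)=0$, the Hopf-lemma argument of Proposition \ref{p:bounds} (again using \eqref{wspm}) forces $\tilde w\equiv 0$. Hence $w(b_n+\cdot,\cdot)\to 0$ in $C^{2,\gamma'}(K)$ on every compact $K$, and a diagonal choice of radii $R_n\to\infty$ and thresholds $\varepsilon_n\to 0$ yields intervals $[\alpha_n,\beta_n]$ with $\alpha_n\to+\infty$, $\beta_n-\alpha_n\to\infty$ and $\sup_{[\alpha_n,\beta_n]}w(\cdot,1)=\varepsilon_n\to 0$. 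The target configuration is a bump sitting at bounded distance to the left of such a valley: letting $\hat q_n$ be the last point left of $\alpha_n$ with $w(\cdot,1)=L/2$, one has $w<L/2$ on $(\hat q_n,\alpha_n)$ and $w\le\varepsilon_n$ on $[\alpha_n,\beta_n]$. Extracting $\hat w$ from $w(\cdot+\hat q_n,\cdot)$ (a genuine subsolution of $w$ by Proposition \ref{subsub}) gives $\hat q_n\to+\infty$ and $\hat w(0,1)=L/2>0$, so $\hat w\not\equiv 0$; and \emph{provided} $\alpha_n-\hat q_n$ stays bounded, every fixed $q>0$ eventually satisfies $\hat q_n+q\in[\alpha_n,\beta_n]$, whence $\hat w(q,1)=\lim_n w(\hat q_n+q,1)=0$ and $\hat w(q,1)\to 0$ as $q\to+\infty$, as required.

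The delicate step, which I expect to be the main obstacle, is precisely to make the two demands compatible, i.e.\ to secure this adjacency. The flat valleys give decay of the right tail, but if the descent from level $L/2$ into the valley spread over an unboundedly long, slowly varying plateau (so that $\alpha_n-\hat q_n\to\infty$), then for fixed $q>0$ the point $\hat q_n+q$ would remain in the plateau and $\hat w(q,1)$ need not vanish; conversely, pushing the center into the valley to force decay risks letting the surviving bump drift off to $-\infty$, giving $\hat w\equiv 0$. I would control this by pairing the vanishing of the valley subsolution $\tilde w$ (which makes the valleys genuinely flat and as wide as desired) with a nonzero bump subsolution $\bar w$ obtained from the centers $q_j$. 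Such $\bar w$ satisfies $\bar w\ge 0$, $\bar w(0,1)\ge\delta$, and $\inf_{\R}\bar w(\cdot,1)=0$: the last point follows from Proposition \ref{p:lowerbound}, since $\inf_{\R}\bar w(\cdot,1)>0$ would mean the solution $H(\cdot;s_+(r))+\bar w$ has $\check\eta>\dmin$ yet $\FF=\FFmin$ (the flow force constant being preserved in the limit), a contradiction.

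Working with $\bar w$ relocates the analysis to a solution whose valleys are forced to lie adjacent to its surviving bump, because $\bar w$ carries a fixed-height bump at the origin while $\inf_{\R}\bar w(\cdot,1)=0$ compels wide flat valleys arbitrarily far out. I would then apply the last-exceedance centering of the previous paragraph to $\bar w$, extract the decaying nonzero profile there, and invoke Proposition \ref{subsub} to conclude that the resulting $\hat w$ is a subsolution of the original $w$, associated with a sequence $\hat q_j\to+\infty$, nonzero, and satisfying $\hat w(q,1)\to 0$ as $q\to+\infty$. The technical heart of the argument is thus the quantitative control, through the two extracted subsolutions and a diagonal selection, preventing the surviving mass from escaping to $-\infty$ while the right half of the profile flattens.
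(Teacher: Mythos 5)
Your reduction to the ``bump adjacent to a flat valley'' configuration is fine as far as it goes, and you have correctly isolated the crux: nothing prevents the descent from level $L/2$ into the valley from spreading over longer and longer, slowly varying plateaus, in which case $\alpha_n-\hat q_n\to\infty$ and the last-exceedance centering yields a nonzero subsolution with no decay on the right. The problem is that your proposed repair does not close this gap. The bump subsolution $\bar w$ extracted from the centers $q_j$ has exactly the same three properties you started with for $w$: it is nonnegative, carries a nonzero bump at the origin, and $\inf_{\R}\bar w(\cdot,1)=0$ (your appeal to Proposition \ref{p:lowerbound} here is legitimate, since the flow force constant passes to the limit). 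But these are precisely the hypotheses under which the adjacency problem arises in the first place; the assertion that ``$\bar w$'s valleys are forced to lie adjacent to its surviving bump'' is unsupported, because the slow-modulation scenario is just as consistent with $\bar w$ as with $w$, and iterating the subsolution extraction merely reproduces the same configuration. The argument is therefore circular at its decisive step. (A secondary, fixable issue: your reflection argument ``I may assume the infimum is approached at $+\infty$'' does not prove the lemma as stated, which requires the sequence to accumulate at $+\infty$; the correct route, used in the paper, is that if $\liminf_{q\to+\infty}w(q,1)>0$ then any subsolution along a sequence tending to $+\infty$ has strictly positive trace, whence $\FF>\FF_-(r)$ by Proposition \ref{p:lowerbound}, a contradiction.)

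What is missing is an input that excludes slowly varying, non-decaying small profiles, and this is where the paper's proof takes a genuinely different route. Instead of centering at the level $L/2$, which is not small, the paper centers at the left endpoints $a_j$ of the maximal intervals $I_j\ni q_j$ on which $w(\cdot,1)<\epsilon$, with $\epsilon$ chosen small from the start. Since $|I_j|\to\infty$ (otherwise one gets a nonzero subsolution whose top trace vanishes at a point, contradicting Proposition \ref{p:bounds}(i)), the resulting subsolution $\tilde w$ satisfies $\tilde w(0,1)=\epsilon$, hence is nonzero, and $\tilde w(\cdot,1)\le\epsilon$ on $(0,\infty)$. If $\tilde w$ decays on the right, one is done; if not, a second extraction along points $\hat q_j\to+\infty$ with $\tilde w(\hat q_j,1)\ge\hat\epsilon$ produces a nonzero subsolution $\hat w$ with $\hat w(\cdot,1)\le\epsilon$ on all of $\R$, i.e.\ a uniformly small-amplitude solution over the supercritical stream. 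At this point the paper invokes the classification of small-amplitude unidirectional solutions from \cite{GrovesWahlen08}: for $\epsilon$ small (depending on $r$) the only nontrivial such solutions are supercritical solitary waves, so $\hat w$ decays at infinity, and Proposition \ref{subsub} transports it back to a subsolution of $w$ along a sequence accumulating at $+\infty$. This center-manifold classification is exactly the tool that rules out your plateau scenario; without it, or an independent substitute of comparable strength, the adjacency you need cannot be secured and the lemma remains unproved.
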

\begin{proof}
	First we note that even so \eqref{wdecay} is violated there exists a sequence $q_j \to +\infty$ such that $w(q_j,1) \to 0$ as $j \to +\infty$. Indeed, otherwise we can find a subsolution $\tilde{w}$ for which $\inf_\R w(q,1) > 0$ and then $\FF > \FF_-(r)$ by Proposition \ref{p:lowerbound}, leading to a contradiction. On the other hand, for any sufficiently small $\epsilon > 0$ we have
	\[
	\limsup_{q \to +\infty} w(q,1) > \epsilon.
	\]
	Let $I_j = (a_j, b_j)$ be the largest interval containing $q_j$ and such that $w(q; 1) < \epsilon$ for all $q \in I_j$. Assuming $0 < \epsilon < w(1, 0)$, we see that all $I_j$ are bounded, while $|I_j| \to +\infty$ as $j \to +\infty$. Indeed, if lengths are bounded for some subsequence, then we can find a subsolution $\tilde{w}$ corresponding to a subsequence of $\{q_j\}_{j=1}^\infty$, which is not zero identically and such that $\tilde{w}(0,1) = 0$. The latter is forbidden by Proposition \ref{p:bounds} (i). 
	
	Now, let us consider a subsolution $\tilde{w}$ corresponding to the sequence $\{a_j\}_{j=1}^\infty$, where $a_j$ is the left endpoint of $I_j$. Then $\tilde{w}$ must be nonnegative in $S$ and $\tilde{w}(0, 1) = \epsilon$, so it is not zero identically. Moreover, because $|I_j| \to + \infty$ as $j \to +\infty$ and $w(q, 1) \leq \epsilon$ on each $I_j$, we conclude that
	\[
	\tilde{w}(q,1) \leq \epsilon \ \ \text{for all} \ \ q > 0.
	\]
	If $\tilde{w}(q,1)$ converges to zero as $q \to +\infty$ then we are done. If it is not true, then there exists $\hat{\epsilon} > 0$  and a sequence $\hat{q}_j \to +\infty$ such that $\tilde{w}(\hat{q}_j,1) \geq \hat{\epsilon}$, $j \geq 1$. Let $\hat{w}$ be a subsolution of $\tilde{w}$ corresponding to $\{\hat{q}_j\}_{j=1}^\infty$. Then $\hat{w}$ is not zero identically, since $\hat{w}(0,1) \geq \hat{\epsilon}$ by the construction. Furthermore, 
	\[
	\hat{w}(q,1) \leq \epsilon \ \ \text{for all} \ \ q \in \R.
	\]
	Thus, if $\epsilon$ is sufficiently small from the beginning (depending on $r$), then $\hat{w}$ is a small-amplitude solution, supported by a supercritical flow. Therefore, $\hat{w}$ describes a supercritical solitary wave, as follows from \cite{GrovesWahlen08} (the only small-amplitude solutions are solitary waves). As a solitary wave, $\hat{w}$ decays to zero at infinity and provides the desired subsolution; see Proposition \ref{subsub}.
\end{proof}

Let us prove \eqref{wdecay}, If it is not the case, then Lemma \ref{lemma:decay} provides with a non-zero subsolution $\hat{w}$ corresponding to a sequence $\hat{q}_j \to +\infty$ that decays to zero. 
Let $\hat{\Phi}$ be the flow force flux function corresponding to $\hat{w}$. It is defined as a limit over compact subsets of flow force flux functions $\Phi(q+\hat{q}_j,p)$ as $j \to +\infty$, where $\Phi$ is the flow force flux function for $w$. At the same time $\hat{\Phi}$ can be defined explicitly by formula \eqref{fff}, where $\ws$ is replaced by $\hat{w}$ and $h$ is replaced by $\hat{w} + H$. Let us obtain asymptotics for $\hat{\Phi}$. By Proposition \ref{solasymp} function $\hat{w}$ is subject to
\[
\hat{w}(q,p) =  a \varphi_1(p) e^{-\lambda_1 q} + f(q,p) e^{-\lambda_1'q}, \ \ (q,p) \in S,
\]
where $a \neq 0$ and $f \in C^{2,\gamma}(\overline{S})$. Using this formula in \eqref{fff} one obtains
\[
\hat{\Phi}(q,p) = a^2 \frac{\varphi_1'(p)\varphi(p)}{H_p^3(p;s_-(r))} e^{-2\lambda_1 q} + g(q,p) e^{-(\lambda_1 + \lambda_1')q}, \ \ (q,p) \in S,
\]
where $g \in C^{2,\gamma}(\overline{S})$. In particular, there exist $\hat{q}_\star > 0$ and $A>0$ such that
\[
	\hat{\Phi}(q,p) \geq A p e^{-2\lambda_1 q}
\]
for all $q \geq \hat{q}_\star$ and $p \in [0,1]$. Now we consider an interval $\hat{I} = [0,2\hat{q}_\star]$ containing $\hat{q}_\star$. We know that functions $\Phi(q+\hat{q}_j,p)$ converge to $\hat{\Phi}$ in $C^{2,\gamma'}(\hat{I} \times [0,1])$ as $j \to +\infty$. Therefore, we can find an integer $j_\star > 0$ such that
\[
\Phi(\hat{q}_\star + \hat{q}_j,p) \geq \tfrac12 A p e^{-2\lambda_1 \hat{q}_\star}
\]
for all $p \in [0,1]$ and all $j \geq j_\star$. Note that the right-hand side is independent of $j$. Let us put $Q^{(j)} = [\hat{q}_\star + \hat{q}_j, \hat{q}_\star + \hat{q}_{j+1}] \times [0,1]$ and denote by $Q^{(j)}_l$, $Q^{(j)}_r$ and $Q^{(j)}_t$ the left, right and top boundaries of the rectangle, excluding corner points. Furthermore, we put
\[
\check{\eta}_j = d_-(r)+\min_{ [\hat{q}_\star + \hat{q}_j, \hat{q}_\star + \hat{q}_{j+1}]} w(q,1).
\]
By assumption $\check{\eta} = d_-(r)$ we have that $\check{\eta}_j \to 0$ as $j \to +\infty$. Moreover, since $\Phi = w^2$ along the top boundary $p=1$, we also have
\[
\lim_{j \to \infty} \inf_{Q^{(j)}} \Phi = 0.
\]
\begin{figure}[!t]
	\centering%
	\includegraphics[scale=0.7]{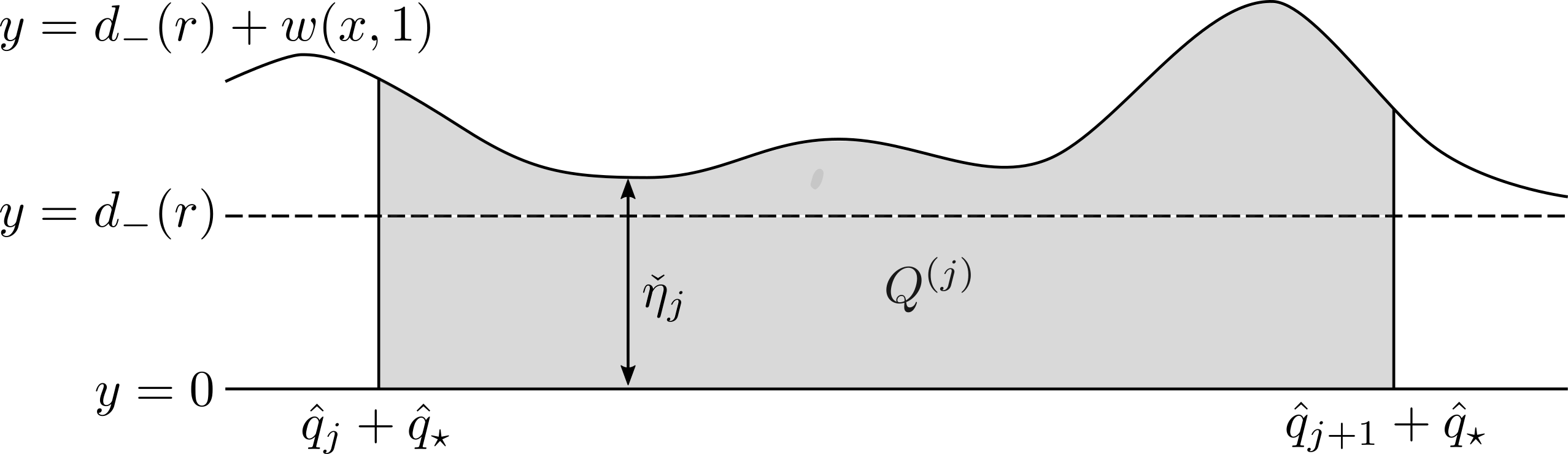}
	\caption{A sketch of the fluid domain in $x,y$-coordinates, corresponding to regions $Q^{(j)}$.}
	\label{fig:stokessolitary}
\end{figure}
Let us consider a stream solution $H(p;s_j)$ for which $d(s_j) = \check{\eta}_j$. Because $\check{\eta}_j > d_-(r)$, we have $s_-(r)<s_j < s_+(r)$, while $s_j \to s_+(r)$ as $j \to +\infty$. In particular, functions $\Phi^{(s_j)}$ converge to $\Phi$ in $C^{2,\gamma}(\overline{S})$ as $j \to +\infty$. This allows us to find an integer $j_\dagger \geq j_\star$ such that
\begin{equation} \label{phipasymp}
\Phi^{(s)}(\hat{q}_\star + \hat{q}_j,p) \geq \tfrac14 A p e^{-2\lambda_1 \hat{q}_\star}
\end{equation}
for all $p \in [0,1]$, all $s \in [s_j,s_+(r)]$ and all $j \geq j_\dagger$. This shows that function $\Phi^{(s)}$ is positive on $Q^{(j)}_l$ and $Q^{(j)}_l$ for all $j \geq j_\dagger$. On the other hand, by the choice of $s_j$, we have 
\[
\min_{Q^{(j)}_t} w^{(s_j)} = 0, \ \ j \geq j_\dagger.
\]
We recall that \eqref{fff:top} gives
\[
\Phi^{(s_j)}(q,1) = 2 (\FF - \sigma(s_j;r)) - 2 (r - R(s))w^{(s_j)}(q,1) + [w^{(s_j)}(q,1)]^2, \ \ q\in\R.
\]
This shows that
\[
\min_{Q^{(j)}_t} \Phi^{(s_j)} \leq 2 (\FF - \sigma(s_j;r)) = 2 (\FF_-(r) - \sigma(s_j;r)) < 0, \ \ j \geq j_\dagger
\]
by Lemma \ref{lemma:sigma}. On the other hand $\min_{Q^{(j)}_t} \Phi > 0$ and by the continuity there exists $s_j^\star \in (s_j,s_+(r))$ such that
\[
\min_{Q^{(j)}_t} \Phi^{(s_j^\star)} = 0.
\]
Because of \eqref{phipasymp} function $\Phi^{(s_j^\star)}$ is positive on the vertical sides $Q^{(j)}_l$ and $Q^{(j)}_r$, while equals to zero on the bottom of the rectangle $Q^{(j)}$, provided $j \geq j_\dagger$. Thus the minimum of $\Phi^{(s_j^\star)}$ on $Q^{(j)}$ is attained for some $(q_j^\star,1) \in Q^{(j)}_t$ on the upper boundary, where $\Phi^{(s_j^\star)}(q_j^\star,1) = 0$. In particular, we have $\Phi^{(s_j^\star)}_p(q_j^\star,1) > 0$ by the Hopf lemma so that $w^{(s_j^\star)}_q(q_j^\star,1) \neq 0$. Now since $\Phi^{(s_j^\star)}_q(q_j^\star,1) = 0$ the boundary relation \eqref{fff:top} after the differentiation with respect to $q$ gives  $w^{(s_j^\star)}(q_j^\star,1) = r - R(s_j^\star)$. Using that we find
\[
\Phi^{(s_j^\star)}(q_j^\star,1) = \kappa(s_j^\star;r) = 0.
\]
But $\kappa(s_j^\star;r) = 0$ requires $\kappa(s_+(r);r) > 0$ by Lemma \ref{lemma:kappa}, which leads to a contradiction, since $\kappa(s_+(r);r) = \FF - \FF_-(r) = 0$. Thus, we proved \eqref{wdecay} and the statement (iii) of the theorem now follows from Theorem 3.1 in \cite{Hur07}.

\bibliographystyle{siam}
\bibliography{bibliography}
\end{document}